  \def\MR#1{}
\theoremstyle{plain}
\newtheorem{thm}{Theorem}[section]
\newtheorem{prop}[thm]{Proposition}
\newtheorem{lem}[thm]{Lemma}
\newtheorem{cor}[thm]{Corollary}
\newtheorem*{thm*}{Theorem}
\theoremstyle{definition}
\newtheorem{defi}[thm]{Definition}
\newtheorem{conj}[thm]{Conjecture}
\theoremstyle{remark}
\newtheorem{rem}[thm]{Remark}
\numberwithin{equation}{section}
\newcommand{\Z}{\mathbb{Z}}
\newcommand{\Q}{\mathbb{Q}}
\newcommand{\C}{\mathbb{C}}
\renewcommand{\P}{\mathbb{P}}
\newcommand{\V}{\mathbb{V}}
\DeclareMathOperator{\Sym}{\mathrm{Sym}}
\DeclareMathOperator{\Spec}{\mathrm{Spec}}
\DeclareMathOperator{\Proj}{\mathrm{Proj}}
\DeclareMathOperator{\OGr}{\mathrm{OGr}}
\DeclareMathOperator{\Spin}{\mathrm{Spin}}
\DeclareMathOperator{\Bl}{\mathrm{Bl}}
\DeclareMathOperator{\Hom}{Hom}
\DeclareMathOperator{\End}{End}
\DeclareMathOperator{\Ext}{Ext}
\DeclareMathOperator{\Aut}{Aut}
\DeclareMathOperator{\refl}{ref}
\DeclareMathOperator{\Tot}{Tot}
\newcommand{\mcE}{\mathcal{E}}
\newcommand{\mcF}{\mathcal{F}}
\newcommand{\mcG}{\mathcal{G}}
\newcommand{\mcO}{\mathcal{O}}
\newcommand{\mcP}{\mathcal{P}}
\newcommand{\mcS}{\mathcal{S}}
\newcommand{\mcT}{\mathcal{T}}
\newcommand{\mcU}{\mathcal{U}}
\newcommand{\mcV}{\mathcal{V}}
\newcommand{\wX}{\widetilde{X}}
\DeclareMathOperator{\coh}{coh}
\DeclareMathOperator{\Qcoh}{Qcoh}
\DeclareMathOperator{\modu}{mod}
\DeclareMathOperator{\RG}{\mathrm{R}\Gamma}
\DeclareMathOperator{\RHom}{\mathrm{RHom}}
\DeclareMathOperator{\Db}{\mathrm{D}^{\mathrm{b}}}
\DeclareMathOperator{\D}{\mathrm{D}}
\mathchardef\mhyphen="2D
\title[D-equivalence for the simple flop of type $G_2^{\dagger}$]{Derived equivalence for the simple flop of type $G_2^{\dagger}$ via tilting bundles}
\author[W.HARA]{Wahei Hara}
\address[W.Hara]{Kavli Institute for the Physics and Mathematics of the Universe (WPI), University of Tokyo, 5-1-5 Kashiwanoha, Kashiwa, 277-8583, Japan.}
\email{wahei.hara@ipmu.jp}
\date{\today}
\subjclass[2020]{14F08, 14E05, 14E16.}
\keywords{Derived categories, Flops, Tilting bundles}
\begin{document}
\maketitle

\begin{abstract}
The aim of this article is to prove the derived equivalence for a local model of the simple flop of type $G_2^{\dagger}$, which was found by Kanemitsu \cite{Kan22}.
This flop is the only known simple flop that comes from a non-homogeneous roof.
The proof of the derived equivalence is done by using tilting bundles, and also produces a noncommutative crepant resolution of the singularity that is derived equivalent to both sides of the flop.
\end{abstract}


\section{Introduction}

\subsection{Background}

The study of the derived categories of coherent sheaves over algebraic varieties is one of recent central topics in algebraic geometry, 
and it is also related to mathematical physics, representation theory, and many other fields of mathematics.
In the context of algebraic geometry, the relation between derived categories and birational geometry is actively studied, 
and many of those works are motivated by the following conjecture,
which suggests that derived categories and birational geometry would be parallel in some sense.
\begin{conj}[DK conjecture \cite{BO02, Kaw02}]
Assume that two smooth varieties $X_+$ and $X_-$ are K-equivalent, i.e.~there exist another smooth variety $\wX$ and two projective birational morphisms $\phi_{\pm} \colon \wX \to X_{\pm}$ such that $\phi_+^* K_{X_+} \sim \phi_-^* K_{X_-}$, where $K_{X_{\pm}}$ are the canonical divisors.
Then $X_+$ and $X_-$ are D-equivalent, i.e.~there exists an exact equivalence $\Db(\coh X_+) \simeq \Db(\coh X_-)$ of triangulated categories.
\end{conj}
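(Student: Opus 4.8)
The plan is to proceed in two stages: first to reduce an arbitrary K-equivalence to a composition of elementary pieces, and then to establish the derived equivalence for each such piece, most naturally by exhibiting a single Fourier--Mukai kernel or, in the spirit of the present article, a common (possibly noncommutative) crepant resolution.

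\emph{Stage 1 (reduction to flops).} Starting from the roof $\phi_{\pm}\colon \wX \to X_{\pm}$ that witnesses the K-equivalence, I would run a relative minimal model program over a common base and invoke the expectation---a theorem in dimension at most three and in several structured higher-dimensional families---that two K-equivalent smooth varieties are connected by a finite chain of flops $X_+ = Y_0 \dra Y_1 \dra \cdots \dra Y_n = X_-$; see \cite{Kaw02}. Since D-equivalence is transitive, it then suffices to treat a single flop $X_+ \to X_0 \leftarrow X_-$ given by small contractions to a singular base $X_0$ with $\wX$ a common crepant resolution.

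\emph{Stage 2 (equivalence for one flop).} For a fixed flop I would take the kernel $\mcO_Z \in \Db(\coh(X_+ \times X_-))$, where $Z$ is a suitable structure on the fiber product $X_+ \times_{X_0} X_-$, and try to show that the induced Fourier--Mukai functor $\Phi_Z \colon \Db(\coh X_-) \to \Db(\coh X_+)$ is an equivalence. By the Bondal--Orlov criterion \cite{BO02} this reduces to a spanning-class computation proving $\Phi_Z$ fully faithful, after which the hypothesis that $K_{X_+}$ and $K_{X_-}$ pull back to the same class on $\wX$ forces compatibility with the Serre functors and upgrades fully faithful to an equivalence. Closer to the method used in this article, I would instead search for a tilting bundle $T$ on $\wX$ that descends to tilting objects on both $X_{\pm}$, producing equivalences $\Db(\coh X_{\pm}) \simeq \Db(\mod \Lambda)$ for a common endomorphism algebra $\Lambda = \End(T)$ realizing a noncommutative crepant resolution of $X_0$; composing the two then gives the desired $\Db(\coh X_+) \simeq \Db(\coh X_-)$.

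\emph{Main obstacle.} The genuine difficulties are both global and local. Globally, the factorization of an arbitrary K-equivalence into flops is open beyond low dimensions and special classes, so Stage 1 cannot be taken for granted. Locally, even for a single flop there is no uniform recipe for the kernel or for the tilting bundle: verifying full faithfulness of $\Phi_Z$, or the tilting property of $T$, requires control of the exceptional loci and of the singularities of $X_0$ that is simply unavailable in the stated generality, and the existence of a tilting bundle inducing an NCCR is presently known only case by case. I therefore expect the decisive step to be Stage 2 for flops whose contracted locus is not of a standard (Mukai or homogeneous-roof) type---precisely the phenomenon that makes the $G_2^{\dagger}$ example studied below both hard and genuinely new.
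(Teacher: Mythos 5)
There is a genuine gap, and it is fundamental: the statement you are trying to prove is the DK \emph{conjecture}, which the paper explicitly records as open (``The DK conjecture is still widely open, even for flops''). The paper contains no proof of it; what it proves is Theorem~\ref{main thm}, the derived equivalence for one specific local model, the simple flop of type $G_2^{\dagger}$, via an explicit pair of tilting bundles. Your proposal does not close this gap --- it relocates it. Stage~1 invokes the expectation that K-equivalent smooth varieties are connected by a chain of flops; this is Kawamata's conjecture, a theorem in dimension at most three but open in general, so the reduction cannot be carried out in the stated generality. Stage~2 then requires D-equivalence for an \emph{arbitrary} flop, which is again open: the fiber-product kernel $\mcO_{X_+ \times_{X_0} X_-}$ is only known to induce an equivalence in a short list of cases (Atiyah, Mukai, type $C_2$, as the paper notes), and there is no general mechanism producing a tilting bundle on $\wX$ or on $X_{\pm}$, nor an NCCR of the base, outside of case-by-case constructions. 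You concede both points yourself in your ``Main obstacle'' paragraph, which is exactly the concession that turns the text from a proof into a research program.

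To be clear about what would be needed: a correct submission here could only be either (a) a proof of both open ingredients (Kawamata's factorization conjecture plus D-equivalence for all flops), which no one has, or (b) an honest restriction of the claim to the case the paper actually handles. For (b), the paper's route is worth internalizing, since it is a completed instance of your Stage~2: it builds tilting bundles $\mcT_{\pm}$ on $X_{\pm}$ from Kapranov's exceptional collection on $\Q^5$ (modified by a nonsplit extension $\mcP$, Theorem~\ref{thm tilting}), proves $(f_+)_*\mcT_+ \simeq (f_-)_*\mcT_-$ using the key bundle $\mcE$ on the roof $W$ (Proposition~\ref{prop exchange}), and deduces an $R$-algebra isomorphism $\End_{X_+}(\mcT_+) \simeq \End_{X_-}(\mcT_-)$, giving an $R$-linear equivalence compatible with the flopping contractions. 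Note also that the common endomorphism algebra is an NCCR of $R$, not of a resolution of it, and that no tilting bundle on $\wX$ itself is used --- a small but real divergence from how your Stage~2 is phrased.
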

A flop between two smooth varieties gives an example of a K-equivalent pair.
The DK conjecture is still widely open, even for flops.
It is known by \cite{BO95} that, the most standard example of a flop called an Atiyah flop satisfies the DK conjecture.

Generalising the construction of the Atiyah flops, 
Kanemitsu \cite{Kan22} found a large list of flops called simples flops, and labeled them using the Dynkin data.
The aim of this article is to prove the DK conjecture for an exceptional example of a simple flop, called the simple flop of type $G_2^{\dagger}$.

\subsection{The simple flop of type $G_2^{\dagger}$ and the main result} \label{sect: flop}
This section explains the geometry of the simple flop of type $G_2^{\dagger}$ and then states the main result.

Let $H_+ = \Q^5$ be the five dimensional quadric.
It admits the (dual) spinor bundle $\mcS^{\vee}$ of rank $4$.
It is known by Ottaviani \cite{Ottaviani88} that a general section $\mcO \to \mcS^{\vee}$ is nowhere vanishing, 
and hence it has a rank three quotient bundle $\mcG$ that lies in an exact sequence $0 \to \mcO \to \mcS^{\vee} \to \mcG \to 0$.
The bundle $\mcG$ is a stable bundle with Chern classes $(c_1, c_2,c_3) = (2,2,2)$.
It is also known that all stable rank three bundles over $\Q^5$ with these Chern classes are obtained in the way above.
These rank three bundles are sometimes called \textit{Ottaviani bundles} over $\Q^5$.
Ottaviani bundles form a non-trivial moduli, but  the group $\Aut(\Q^5)$ acts on this moduli space transitively \cite{Ottaviani88, Ott90}.

Let $\mcG_+$ be an Ottaviani bundle over $H_+$, and consider the total space
\[ X_+ := \Tot_{H_+}(\mcG_+^{\vee}(-1)) = \V_{H_+}(\mcG_+(1)) = \Spec_{H_+} \Sym^{\bullet}(\mcG_+(1)). \]
The above fact about the moduli space says, as an abstract variety, $X_+$ does not depend on the choice of Ottaviani bundles.
Let $H_+^0 \subset X_+$ be the zero section, 
$\phi_+ \colon \wX := \Bl_{H_+^0} X_+ \to X_+$ the blowing-up of $X_+$ along $H_+^0$,
and $E \subset \wX$ the exceptional divisor.
By construction, $E \simeq \P_{H_+}(\mcG_+(1)) := \Proj \Sym^{\bullet}(\mcG_+(1))$.
It is known that $E$ admits another $\P^2$-bundle structure $E \to H_-$, and the base $H_-$ is also isomorphic to $\Q^5$
(see Section~\ref{sect: roof}).
Under this, $E$ is identified with the projectivization $\P_{H_-}(\mcG_-(1))$ of an Ottaviani bundle over $H_- \simeq \Q^5$.
This geometry also yields another smooth blow-down $\phi_- \colon \wX \simeq \Bl_{H_-^0} X_- \to X_-$ with the same exceptional divisor $E$, where
$X_- = \Tot_{H_-}(\mcG_-^{\vee}(-1))$ and $H_-^0 \subset X_-$ is the zero-section.
Put $R := H^0(X_+, \mcO_{X_+}) \simeq H^0(X_-, \mcO_{X_-})$.
Then two natural morphisms $f_{\pm} \colon X_{\pm} \to \Spec R$ are flopping contractions,
and the birational map $X_+ \dashrightarrow X_-$ is the associated flop.
This is an example of \textit{a simple flop}, i.e.~a flop connected by one smooth blow-up and one smooth blow-down.
The following is the main theorem of this article.

\begin{thm} \label{main thm}
There exists an exact equivalence of $R$-linear triangulated categories 
\[ \Phi \colon \Db(\coh X_+) \xrightarrow{\sim} \Db(\coh X_-) \]
such that $R{f_+}_* \simeq  R{f_-}_* \circ \Phi$.
\end{thm}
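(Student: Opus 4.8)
The plan is to prove the equivalence by constructing tilting bundles on both sides of the flop and identifying their endomorphism algebras with a single noncommutative crepant resolution (NCCR) of $R$. Note first that, since $X_\pm$ is smooth and the contracted locus $H_\pm^0 \simeq \Q^5$ has codimension $3$ in the eight-dimensional $X_\pm$, the flopping contractions $f_\pm \colon X_\pm \to \Spec R$ are small and $R$ is normal (and Gorenstein). I would first construct a tilting bundle $T_+$ on $X_+$ containing $\mcO_{X_+}$ as a direct summand, obtaining an $R$-linear equivalence $\Psi_+ = \RHom_{X_+}(T_+, -) \colon \Db(\coh X_+) \xrightarrow{\sim} \Db(\modu \L_+)$ with $\L_+ := \End_{X_+}(T_+)$, and symmetrically a tilting bundle $T_-$ on $X_-$ with $\L_- := \End_{X_-}(T_-)$. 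Once $\L_+ \simeq \L_-$ as $R$-algebras in a way compatible with the distinguished idempotents cut out by the structure-sheaf summands, the composite $\Phi := \Psi_-^{-1} \circ \Psi_+$ will be the desired equivalence.

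For the construction of $T_\pm$ I would exploit that $\pi_\pm \colon X_\pm = \Tot_{H_\pm}(\mcG_\pm^\vee(-1)) \to H_\pm \simeq \Q^5$ is the total space of a vector bundle, so that $R\pi_{\pm *}\mcO_{X_\pm} \simeq \Sym^\bullet(\mcG_\pm(1))$ and $\Db(\coh X_\pm)$ is generated by pullbacks of a generator of $\Db(\coh \Q^5)$. Starting from the building blocks furnishing Kapranov's full exceptional collection on the odd quadric $\Q^5$ --- the spinor bundle $\mcS$ (equivalently, via $0 \to \mcO \to \mcS^\vee \to \mcG \to 0$, the Ottaviani bundle itself) together with $\mcO, \mcO(1), \dots, \mcO(4)$ --- I would assemble a bundle $T_0$ on $\Q^5$ and set $T_\pm := \pi_\pm^* T_0$, possibly after replacing some summands by suitable twists adapted to $\mcG_\pm$. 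Self-generation of $T_\pm$ is then inherited from $\Q^5$, and the key Ext-vanishing reduces, by the projection formula, to the cohomology vanishing
\[ H^{>0}\bigl(\Q^5,\ \mcE_i^\vee \otimes \mcE_j \otimes \Sym^k(\mcG_\pm(1))\bigr) = 0 \qquad (k \geq 0) \]
for all summands $\mcE_i, \mcE_j$ of $T_0$. This I would verify using the defining sequence for $\mcG$, Bott-type vanishing, and the known cohomology of spinor bundles on quadrics; it is this step that dictates the precise choice of twists in $T_0$.

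To identify the two endomorphism algebras I would pass to reflexive $R$-modules. Because $f_\pm$ are small and $R$ is normal, pushforward yields reflexive modules $M_\pm := f_{\pm *} T_\pm$ with $\L_\pm = \End_{X_\pm}(T_\pm) \cong f_{\pm *}\,\mathcal{E}nd_{X_\pm}(T_\pm) \cong \End_R(M_\pm)$, so that $\L_\pm$ is an NCCR of $R$. It then suffices to produce an isomorphism $M_+ \cong M_-$ of $R$-modules carrying the free summand $f_{+*}\mcO_{X_+} = R$ to $f_{-*}\mcO_{X_-} = R$. Each indecomposable summand of $M_\pm$ is the $R$-module of sections of a summand of $T_\pm$, and I would compute these directly, matching them across the two sides through the common exceptional divisor $E = \P_{H_+}(\mcG_+(1)) = \P_{H_-}(\mcG_-(1))$ and its two $\P^2$-bundle structures. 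I expect this matching to be the main obstacle: unlike the classical homogeneous roofs, the roof underlying the $G_2^\dagger$ flop is non-homogeneous, so the relation between the Ottaviani bundles $\mcG_+$ and $\mcG_-$ over the two copies of $\Q^5$ is not governed by a group action and must instead be extracted from the explicit birational geometry of $\wX$.

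Finally, granting $M_+ \cong M_-$ with matched free summands, I obtain $\L_+ \simeq \L_- =: \L$ together with a distinguished idempotent $e \in \L$ with $e\L e = R$ corresponding to the structure-sheaf summand on each side, so that $\Phi = \Psi_-^{-1}\circ\Psi_+$ is $R$-linear. For the compatibility with the contractions, note that, $\Spec R$ being affine, $Rf_{\pm *} \simeq \RHom_{X_\pm}(\mcO_{X_\pm}, -) \simeq e \cdot \Psi_\pm(-)$ as functors valued in $\Db(\modu R)$, since $\mcO_{X_\pm} = T_\pm e$. Hence $Rf_{-*}\circ\Phi \simeq e\cdot\Psi_-\circ\Psi_-^{-1}\circ\Psi_+ \simeq e\cdot\Psi_+ \simeq Rf_{+*}$, which is exactly the asserted relation $R(f_+)_* \simeq R(f_-)_* \circ \Phi$.
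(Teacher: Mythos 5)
You have correctly reproduced the paper's overall architecture --- tilting bundles on both sides, pushforward along the small contractions to reflexive $R$-modules, identification of the two endomorphism algebras as a single NCCR $\Lambda$, and the structure-sheaf/idempotent argument giving $R(f_+)_* \simeq R(f_-)_* \circ \Phi$ (your final paragraph is essentially the paper's own proof of that step). However, of the two technical pillars, one is proposed in a form that fails, and the other is acknowledged but not supplied. First, the tilting bundle cannot be of the form $\pi_\pm^* T_0$: for $X = \Tot_{\Q^5}(\mcG^\vee(-1))$ one has
\[ \Ext^1_X(\pi^*\mcO_{\Q^5}(2), \pi^*\mcS) \simeq H^1(X, \mcS_X^\vee(-3)) \simeq H^1(\Q^5, \mcG(1) \otimes \mcS^\vee(-3)) = \C \neq 0, \]
where the non-vanishing comes from the $\Sym^1$-summand of $\Sym^{\bullet}(\mcG(1))$, i.e.\ from the fibre direction. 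Retwisting does not remove it: writing $T_0 = \bigoplus_{i=0}^{4}\mcO(a+i) \oplus \mcS(b)$, pretilting requires $H^{>0}(X, \mcS_X^\vee(j)) = 0$ on two length-five ranges of twists $j$ (one from the $\Ext(\mcO,\mcS)$-type groups, one from the $\Ext(\mcS,\mcO)$-type groups), and no choice of $a, b$ lets both ranges avoid $j = -3$ as well as the Serre-dual non-vanishing $H^5(X, \mcS_X^\vee(j)) \supseteq H^5(\Q^5, \mcS^\vee(j)) \neq 0$ for $j \leq -6$. The paper's fix is not a twist but an extension: the spinor summand is replaced by the unique non-split extension $0 \to \mcO_X(-2) \to \mcP \to \mcS_X^\vee \to 0$, a bundle that is \emph{not} pulled back from $\Q^5$; moreover the line-bundle summands must be taken in the symmetric range $\mcO_X(-2), \dots, \mcO_X(2)$, because the flop inverts twists, $(f_+)_*\mcO_{X_+}(a) \simeq (f_-)_*\mcO_{X_-}(-a)$ --- a constraint your matching step would also have to confront.

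Second, and decisively: the isomorphism $M_+ \cong M_-$, which you defer with ``I would compute these directly'' and yourself flag as the main obstacle, is the actual mathematical content of the theorem, and your proposal contains no mechanism for producing it. The paper's solution is a concrete geometric input absent from your outline: restricting to $W \subset \OGr(3,8)$ the rank-five bundle obtained as the fibre product of the two tautological sequences on the type-$D_4$ roof yields a ``key bundle'' $\mcE$ sitting in two non-split extensions $0 \to \mcO_W(-1,1) \to \mcE \to \pi_+^*\mcS_+^\vee \to 0$ and $0 \to \mcO_W(1,-1) \to \mcE \to \pi_-^*\mcS_-^\vee \to 0$. Pulling $\mcE$ back to $\wX$, twisting by $\mcO_{\wX}(E)$, and pushing down along $\phi_\pm$, the paper proves $(\phi_\pm)_*(\mcE_{\wX}(E)) \simeq \mcP_\pm$ \emph{simultaneously for both signs}, whence $(f_+)_*\mcP_+ \simeq (f_-)_*\mcP_-$ and then $(f_+)_*\mcT_+ \simeq (f_-)_*\mcT_-$ in $\refl(\Spec R)$. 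Note that the two gaps are linked: it is precisely the extension bundles $\mcP_\pm$ (the modification your pullback plan omits) that this mechanism can match across the flop. Without the key-bundle exchange or a genuine substitute for it, your argument constructs $\Lambda_+$ and $\Lambda_-$ separately but never shows they are isomorphic as $R$-algebras, so the theorem is not proved.
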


\begin{rem}
Since $X_+$ and $X_-$ are isomorphic to each other as abstract varieties, 
the existence of a $\C$-linear equivalence $\Db(\coh X_+) \simeq \Db(\coh X_-)$ is obvious.
Thus the question in this case is the existence of an equivalence that satisfies the compatibility with the flop,
i.e.~the $R$-linear property and the commutativity $R{f_+}_* \simeq  R{f_-}_* \circ \Phi$.
\end{rem}

The classification of simple flops is related to the classification of \textit{roofs}, i.e.~Fano manifolds of Picard rank two and index $n+1$, admitting two different $\P^n$-bundle structures, for some $n$.
A roof appears in the geometry of a simple flop as the exceptional divisor of the blow-ups.
Roofs are partially classified in \cite{Kan22}, and most of them are rational homogeneous manifolds of Picard rank two with two projections to those of Picard rank one.
The roof $E$ (with two projections $E \to H_{\pm}$) is the only known example of a roof such that $E$ is not a rational homogeneous manifold.
The roof $E$ and the associated flop are called of type $G_2^{\dagger}$ in \cite{Kan22}.
In this terminology, the flop $X_+ \dashrightarrow X_-$ this article studies is a local model of the \textit{simple flop of type $G_2^{\dagger}$}.
The non-homogeneous property makes the simple flop of this type the most remarkable among all known simple flops.

\subsection{The method using tilting bundles} \label{sect: method}
The proof of the main theorem is done by using \textit{tilting bundles} (see Definition~\ref{def:tilting}).
Some previous works \cite{Hara17,Hara21, Seg16} also prove the derived equivalence for local models of the Atiyah flops, the Mukai flops, and the simple flops of type $C_2$ and $G_2$ by constructing tilting bundles.

Recall that, if a tilting bundle $\mcT$ exists over a variety $Z$ that is projective over an affine variety,
then it yields an exact equivalence of triangulated categories
\[ \RHom_Z(\mcT, -) \colon \Db(\coh Z) \xrightarrow{\sim} \Db(\modu A), \]
where $A := \End_Z(\mcT)$ and $\modu A$ is the category of finitely generated right modules over $A$
(cf.~\cite[Lemma 2.3]{TU10}).
Thus, in order to prove the derived equivalence $\Db(\coh X_+) \simeq \Db(\coh X_-)$, 
it is enough to show that there exist tilting bundles $\mcT_{\pm}$ over $X_{\pm}$ such that $\Lambda := \End_{X_+}(\mcT_+) \simeq \End_{X_-}(\mcT_-)$ as $R$-algebras.

Sections \ref{sect: tilting} and \ref{sect: D-eq} give a construction of such tilting bundles, which can be summarized as follows.
The tilting bundles constructed in this article are closely related to Kapranov's tilting bundle over $\Q^5$ \cite{Kap88}; however, the pull-back of Kapranov's bundle to $X_{\pm}$ does not yield a tilting bundle.
More precisely, computations using the Borel--Bott--Weil theorem reveal that the bundle has non-vanishing $\Ext^1$, which violates the tilting property (Lemma~\ref{pretilting}).
However, the only non-vanishing extension appears as the $\Ext^1$ between two indecomposable summands, and thus taking the corresponding extension produces tilting bundles (Theorem~\ref{thm tilting}).
Later, it turns out that the constructed tilting bundle admits a natural description from the geometry of rational homogeneous manifolds of Dynkin type $D_4$ (Proposition~\ref{prop exchange}).
In fact, this final step is the most crucial ingredient of this article, 
and allows us to compare the endomorphism rings $\End_{X_{\pm}}(\mcT_{\pm})$ on both sides.

\newpage
\subsection{Further remarks and discussions}
\subsubsection{The associated K3 surfaces}
Let $s_+$ be a regular section of the twisted Ottaviani bundle $\mcG_+(1)$ over $H_+ \simeq \Q^5$.
Then its zero-locus $S_+ \subset H_+$ is a K3 surface  of degree $12$.
Under the flop $X_+ \dashrightarrow X_-$, there exists a corresponding section 
$s_- \in H^0(H_-, \mcG_-(1))$ whose zero-locus $S_- \subset H_-$ is also a K3 surface of degree $12$ (cf.~\cite[Section 5]{Ueda19}).
If one chooses $s_+$ general enough, $S_+$ and $S_-$ give a non-isomorphic pair of K3 surfaces (\cite[Corollary 4.10]{KR22}). 
The derived equivalence in Theorem~\ref{main thm} also produces a derived equivalence $\Db(\coh S_+) \simeq \Db(\coh S_-)$ (cf.~\cite[Section 5]{Ueda19}), which gives a new proof for the D-equivalence of $S_{\pm}$.
More detailed study of those K3 surfaces from the points of view of Hodge theory and motive theory
was already done in \cite{KR22}.
The result in this article adds a new aspect from the flop to the study of those K3 surfaces.

\subsubsection{The generalised McKay correspondence}
The proof of the derived equivalence using tilting bundles fits into the picture of the generalised McKay correspondence.
Indeed, if tilting bundles $\mcT_{\pm}$ as in Section \ref{sect: method} exist, 
the noncommutative $R$-algebra $\Lambda = \End_{X_+}(\mcT_+) = \End_{X_-}(\mcT_-)$ is a \textit{noncommutative crepant resolution (NCCR)} of $R$ (cf.~\cite{VdB04,VdB23}).

It is widely expected that, if an affine scheme $\Spec S$ admits both (projective) crepant resolutions and an NCCR $A$, 
any (projective) crepant resolution $X$ of $S$ can be realised as the King's moduli space $\mathcal{M}_{\theta}(\mathbf{v})$ of $\theta$-stable finite length right $A$-modules of a fixed dimension vector $\mathbf{v}$ for some stability condition $\theta$.
This kind of expectation is called the Craw--Ishii type problem (cf.~\cite[Conjecture 1.7]{Wem18}).

If the expectation actually holds for our $R$ and an NCCR $A$ of $R$, 
then the universal bundles $\mcU_{\pm}$ over $X_{\pm}$ should be tilting bundles with $\End_{X_{\pm}}(\mcU_{\pm}) \simeq A$,
and hence $\mcU_{\pm}$ give a pair of tilting bundles with the desired property.

In order to establish the Craw--Ishii type problem for the NCCR $\Lambda$ of $R$ that is constructed in this article, one needs more detailed information about the algebra $\Lambda$, like an explicit description as the path algebra of a quiver with relations.
However, a question of that kind is not easy in general, and we leave it as a future work.

\subsubsection{Possible alternative proofs for the derived equivalence}
The previous works \cite{Morimura22, RX24, Ueda19} proved the derived equivalence for some simple flops (the Mukai flops and simple flops of types $A^G_4$, $C_2$, $D_5$ and $G_2$) using \textit{mutations of SODs} of $\Db(\coh \wX)$.
The work \cite{Xie24}, which was announced around the same time as this article, 
applies the strategy using mutation to the simple flop of type $G_2^{\dagger}$ (and $D_4$) to give an alternative proof of the derived equivalence.
On one hand,
the resulting derived equivalence $\Psi \colon \Db(\coh X_+) \xrightarrow{\sim} \Db(\coh X_-)$ from the mutation method is very complicated in general.
For example, it is unclear if one can modify the equivalence $\Psi$ so that $R{f_+}_* \simeq  R{f_-}_* \circ \Psi$ holds.
On the other hand, the advantage of the mutation method is that it works in any models of the flop and in the relative settings.

Other possible methods to prove the derived equivalences are the method finding an explicit Fourier--Mukai kernel and the method using the variation of GIT.
For the first method, the structure sheaf of the fiber product $X_+ \times_{\Spec R} X_-$ is a candidate of the Fourier--Mukai kernel, and it actually gives an equivalence if a simple flop is an Atiyah flop, a Mukai flop, or of type $C_2$ \cite{BO95, Hara22,Kaw02,Nam03}.
The method using the variation of GIT is also expected to work for all simple flops, 
but at the time of writing, this strategy is known to work only for the Atiyah flops and the Mukai flops \cite{CF21, HS20}.
One problem is that, for most of simple flops including of type $G_2^{\dagger}$, a nice GIT picture has not been found yet.

\section{The geometry of the roof}

The present section explains the geometry of roofs, 
and constructs a key vector bundle that will be used in the proof of the main theorem.

\subsection{The roofs of types $D_4$ and $G_2^{\dagger}$} \label{sect: roof}

Consider the orthogonal Grassmannians $\OGr(3,8)$ and $\OGr(4,8)$.
It is known that $\OGr(4,8)$ has two connected components that are denoted by $\OGr_+(4,8)$ and $\OGr_-(4,8)$,
and both are isomorphic to the six dimensional quadric $\Q^6 \subset \P^7$.
There are natural projections $p_{\pm} \colon \OGr(3,8) \to \OGr_{\pm}(4,8)$.
Let $\mcS_{\pm}$ be the restriction of rank $4$ universal subbundle to $\OGr_{\pm}(4,8)$.
Under an identification $\OGr_{\pm}(4,8) \simeq \Q^6$, $\mcS_{\pm}$ is isomorphic to a spinor bundle over $\Q^6$.
In addition, the projections $p_{\pm}$ give identifications
\[ 
\begin{tikzcd}
  \P_{\OGr_+(4,8)}(\mcS_+(1)) \arrow[d, "p_+"'] \arrow[r, equal] & \OGr(3,8) \arrow[r, equal] & \P_{\OGr_-(4,8)}(\mcS_-(1)) \arrow[d, "p_-"]  \\
 \OGr_+(4,8) && \OGr_-(4,8).
\end{tikzcd} \]
This diagram is called the roof of type $D_4$, since it consists of rational homogeneous manifolds of Dynkin type $D_4$ \cite{Kan22}.

Of course, twisting a vector bundle by a line bundle does not change the variety obtained by projectivization, 
so $\OGr(3,8)$ can also be identified with $\P_{\OGr_{\pm}(4,8)}(\mcS_{\pm})$.
However, by viewing $\OGr(3,8)$ as above, we obtain isomorphisms of line bundles
\begin{center}
$p_+^*\mcO_{\OGr_+(4,8)}(1) \simeq \mcO_{p_-}(1)$ and $p_-^*\mcO_{\OGr_-(4,8)}(1) \simeq \mcO_{p_+}(1)$,
\end{center}
where $\mcO_{p_+}(1)$ and $\mcO_{p_-}(1)$ are the tautological line bundles for the $\P^3$-bundle structures $p_+$ and $p_-$, respectively.

Choosing a general member $D_+$ of the linear system $\lvert \mcO_{p_+}(1) \rvert$ corresponds to choosing a general section $\mcO_{\OGr_+(4,8)} \to \mcS_+(1)$, and it is known by \cite[Section 3]{Ottaviani88} that this section has a locally free quotient bundle $\mcS_+(1)/\mcO$ of rank three.
On the other hand, $D_+$ also corresponds to a hyperplane section $H_- \simeq \Q^5 \subset \OGr_-(4,8) \simeq \Q^6$.
On $H_-$, an isomorphism $\mcS_-(1)|_{H_-} \simeq \mcS_-^{\vee}|_{H_-}$ exists.
Thus $D_+$ fits in a diagram
\[ 
\begin{tikzcd}
  \P_{\OGr_+(4,8)}(\mcS_+(1)/\mcO) \arrow[d, "q_+"'] \arrow[r, equal] & D_+ \arrow[r, equal] & \P_{H_-}(\mcS_-^{\vee}|_{H_-}) \arrow[d, "q_-"]  \\
 \OGr_+(4,8) && H_-.
\end{tikzcd} \]
Similarly, choosing a general member $W$ of the linear system $\lvert \mcO_{q_-}(1) \rvert$ corresponds to choosing a general section $\mcO_{H_-} \to \mcS_-^{\vee}|_{H_-}$, 
and again, this section has a rank three locally quotient $\mcG_-$, 
which is an Ottaviani bundle over $H_- \simeq \Q^5$.
On the other hand, $W$ also corresponds to a hyperplane section $H_+ \simeq \Q^5 \subset \OGr_+(4,8) \simeq \Q^6$,
and the restriction $\mcG_+ :=  (\mcS_+(1)/\mcO)|_{H_+}$ is also an Ottaviani bundle over $H_+ \simeq \Q^5$.
The obtained diagram
\[ 
\begin{tikzcd}
  \P_{H_+}(\mcG_+) \arrow[d, "\pi_+"'] \arrow[r, equal] & W \arrow[r, equal] & \P_{H_-}(\mcG_-) \arrow[d, "\pi_-"] \\
 H_+ \simeq \Q^5 && H_- \simeq \Q^5
\end{tikzcd} \]
is called \textit{the roof of type $G_2^{\dagger}$}.

The variety $W$ can also be understood as a smooth intersection $D_+ \cap D_-$ of two smooth divisors $D_{\pm} \in \lvert \mcO_{p_{\pm}}(1) \rvert$.
At the time of writing, this variety $W$ is the only known example of a roof that is not a rational homogeneous variety.

\subsection{The key bundle and the exchange diagram over $\OGr(3,8)$ and $W$} \label{key bundle}
The aim of this section is to produce a vector bundle $\mcE$ over $W$, which plays an important role during the proof of the main theorem.
Put $Z = \OGr(3,8)$ and $\mcO_Z(a,b) := p_+^*\mcO_{\OGr_+(4,8)}(a) \otimes p_-^*\mcO_{\OGr_-(4,8)}(b)$.
Let $\mcV$ be the rank three universal subbundle over $Z$.
Then there are two (dual of the) tautological sequences
\begin{center}
$0 \to \mcO_Z(1,-1) \to p_+^*\mcS_+^{\vee} \to \mcV^{\vee} \to 0$ and
$0 \to \mcO_Z(-1,1) \to p_-^*\mcS_-^{\vee} \to \mcV^{\vee} \to 0$.
\end{center}
Taking the fiber product of two bundle $p_+^*\mcS_+^{\vee}$ and $p_-^*\mcS_-^{\vee}$ over $\mcV^{\vee}$ gives a rank five vector bundle $\mcE$ on $Z$ and a commutative diagram
\[ \begin{tikzcd}
 & & 0 \arrow[d] & 0 \arrow[d] & \\
 & & \mcO_Z(-1,1) \arrow[r, equal] \arrow[d] & \mcO_Z(-1,1) \arrow[d] & \\
0 \arrow[r] & \mcO_Z(1,-1) \arrow[r] \arrow[d, equal] & \mcE \arrow[r] \arrow[d] & p_-^*\mcS_-^{\vee} \arrow[r] \arrow[d] & 0 \\
0 \arrow[r] & \mcO_Z(1,-1) \arrow[r] & p_+^*\mcS_+^{\vee} \arrow[r] \arrow[d] & \mcV^{\vee} \arrow[r] \arrow[d] & 0 \\
&&0&0&
\end{tikzcd} \]
of exact sequences.

Since this diagram consists of vector bundles only, all sequences are locally split, 
and hence it restricts to a diagram of exact sequences over $W \subset Z$.
Note that the restriction of $p_{\pm}^*\mcS_{\pm}^{\vee}$ to $W$ is the pull-back of the dual of the spinor bundle over $H_{\pm} \simeq \Q^5$ by the projection $\pi_{\pm} \colon W \to H_{\pm}$.
By abuse of notation, the spinor bundle over $H_{\pm} \simeq \Q^5$ is also denoted by $\mcS_{\pm}$, and the restriction of $\mcE$ to $W$ is also denoted by $\mcE$.

\begin{lem} \label{lem non-split}
Two exact sequences
\begin{align*}
&0 \to \mcO_W(-1,1) \to \mcE \to \pi_+^* \mcS_+^{\vee} \to 0, ~\text{and} \\
&0 \to \mcO_W(1,-1) \to \mcE \to \pi_-^* \mcS_-^{\vee} \to 0
\end{align*}
over $W$ do not split.
\end{lem}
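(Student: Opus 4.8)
The plan is to prove the two sequences are non-split simultaneously, exploiting the evident symmetry exchanging the roles of $+$ and $-$; I describe the argument for the first sequence, the second being identical after swapping $\pi_+\leftrightarrow\pi_-$ and $\mcO_W(-1,1)\leftrightarrow\mcO_W(1,-1)$. The key reduction is that, rather than computing the extension class directly, it suffices to establish the stronger vanishing
\[ \Hom_W(\mcE,\mcO_W(-1,1))=0. \]
Indeed, a splitting of $0\to\mcO_W(-1,1)\to\mcE\to\pi_+^*\mcS_+^{\vee}\to0$ would provide a retraction $\mcE\to\mcO_W(-1,1)$ restricting to the identity on the sub-line-bundle, hence a nonzero element of $\Hom_W(\mcE,\mcO_W(-1,1))$; so the displayed vanishing already forces non-splitting, with no need to pin down the class itself.

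To compute this $\Hom$-group I would use the \emph{other} exact sequence. Dualising $0\to\mcO_W(1,-1)\to\mcE\to\pi_-^*\mcS_-^{\vee}\to0$ and twisting by $\mcO_W(-1,1)$ gives
\[ 0\to \pi_-^*\mcS_-\otimes\mcO_W(-1,1)\to \mcE^{\vee}\otimes\mcO_W(-1,1)\to\mcO_W(-2,2)\to0. \]
The first term has vanishing cohomology: under the roof identification $\pi_+^*\mcO_{H_+}(1)\simeq\mcO_{\pi_-}(1)$ (the $W$-analogue of the $D_4$ relation recalled in Section~\ref{sect: roof}) one rewrites $\pi_-^*\mcS_-\otimes\mcO_W(-1,1)\simeq\pi_-^*(\mcS_-(1))\otimes\mcO_{\pi_-}(-1)$, and since $R\pi_{-\ast}\mcO_{\pi_-}(-1)=0$ for the $\P^2$-bundle $\pi_-$, the projection formula yields $R\Gamma(W,\pi_-^*\mcS_-\otimes\mcO_W(-1,1))=0$. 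Consequently $\Hom_W(\mcE,\mcO_W(-1,1))\simeq H^0(W,\mcO_W(-2,2))$.

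It then remains to show $H^0(W,\mcO_W(-2,2))=0$. Pushing forward along the \emph{first} $\P^2$-bundle $\pi_+\colon W\to H_+\simeq\Q^5$ and using $\pi_{+\ast}\mcO_{\pi_+}(2)\simeq\Sym^2\mcG_+$, this becomes $H^0(\Q^5,\Sym^2\mcG_+(-2))$. Taking the second symmetric power of the defining sequence $0\to\mcO\to\mcS^{\vee}\to\mcG_+\to0$ produces $0\to\mcS^{\vee}\to\Sym^2\mcS^{\vee}\to\Sym^2\mcG_+\to0$; twisting by $\mcO(-2)$ and using that the spinor bundle on $\Q^5$ has no intermediate cohomology (Ottaviani), so that $H^0(\Q^5,\mcS(-1))=H^1(\Q^5,\mcS(-1))=0$, identifies $H^0(\Q^5,\Sym^2\mcG_+(-2))$ with $H^0(\Q^5,\Sym^2\mcS)$.

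The main obstacle is the final vanishing $H^0(\Q^5,\Sym^2\mcS)=0$: this is where the genuine input about spinor bundles enters, and I expect it to follow from a Borel--Weil--Bott computation on $\Q^5=\Spin(7)/P$ (the weight governing $\Sym^2\mcS$ is non-dominant, consistent with $c_1(\mcS)<0$). A secondary point to verify with care is the bookkeeping of the identifications $\pi_{\pm}^*\mcO_{H_{\pm}}(1)\simeq\mcO_{\pi_{\mp}}(1)$ and $\pi_{\pm\ast}\mcO_{\pi_{\pm}}(1)\simeq\mcG_{\pm}$, since the conventions for the two $\P^2$-bundle structures on $W$ control every vanishing above; it is precisely the correct matching of the two tautological bundles that makes the intermediate cohomology collapse in the way needed, and the same computation handles the second sequence verbatim.
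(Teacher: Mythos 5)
Your proof is correct, and it takes a genuinely different route from the paper's. The paper never computes $\Hom_W(\mcE,\mcO_W(-1,1))$: it proves only the vanishing $\Hom_W(\mcO_W(1,-1),\mcO_W(-1,1))\simeq H^0(W,\mcO_W(-2,2))=0$, deduces from the long exact sequence of $\Hom_W(-,\mcO_W(-1,1))$ applied to $0\to\mcO_W(1,-1)\to\pi_+^*\mcS_+^{\vee}\to\mcV^{\vee}\to 0$ that the pullback map $\Ext^1_W(\mcV^{\vee},\mcO_W(-1,1))\to\Ext^1_W(\pi_+^*\mcS_+^{\vee},\mcO_W(-1,1))$ is injective, and then transfers non-splitness from the sequence $0\to\mcO_W(-1,1)\to\pi_-^*\mcS_-^{\vee}\to\mcV^{\vee}\to 0$, which is non-split because $\mcS_-^{\vee}$ is indecomposable. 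Your argument instead establishes the stronger statement $\Hom_W(\mcE,\mcO_W(-1,1))=0$, so that no retraction can exist at all; it needs no indecomposability input and no functoriality of extension classes, but pays with more cohomology, namely $\RG(W,\pi_-^*\mcS_-\otimes\mcO_W(-1,1))=0$ and a Borel--Bott--Weil fact on $\Q^5$. Both arguments ultimately rest on $H^0(W,\mcO_W(-2,2))=0$, and here your route is needlessly long: pushing forward along $\pi_-$ rather than $\pi_+$ gives $(\pi_-)_*\mcO_W(-2,2)\simeq\mcO_{H_-}(2)\otimes(\pi_-)_*\mcO_{\pi_-}(-2)=0$ at once (this is exactly what the paper does), whereas your route through $(\pi_+)_*$ forces the detour via $\Sym^2\mcG_+(-2)$, the symmetric-power sequence, and the vanishing $H^0(\Q^5,\Sym^2\mcS)=0$ that you left unproved. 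That deferred vanishing is true and routine: $\Sym^2\mcS\simeq\Sym^2\mcS^{\vee}(-2)$ corresponds to the weight $(-1,1,1)$, which has non-trivial stabiliser under the dotted Weyl group action and is therefore acyclic; this is precisely the case $(k,j)=(2,-4)$ treated in the proof of Lemma~\ref{BBW}(1). (Also, $H^0(\Q^5,\mcS(-1))=0$ is a standard fact about the spinor bundle but is not literally an ``intermediate cohomology'' statement, so cite it separately.) In sum, your proof closes up completely and even yields a slightly stronger conclusion; the paper's is shorter because it arranges every pushforward to meet negative fiber degrees, where vanishing is free.
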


\begin{proof}
First, observe that 
\begin{align*}
\Hom_W(\mcO_W(1,-1), \mcO_W(-1,1)) &\simeq H^0(W, \mcO_W(-2,2)) \\
&{}\simeq{} H^0(H_-, \mcO_{H_-}(2) \otimes (\pi_-)_*\mcO_{\pi_-}(-2)) \\
&{}={} 0.
\end{align*}
Thus, the natural homomorphism $\Ext_W^1(\mcV^{\vee}, \mcO_W(-1,1)) \to \Ext_W^1(\pi_+^*\mcS^{\vee}, \mcO_W(-1,1))$, 
which sends the class $[0\to O_W(-1,1) \to \pi_-^* \mcS_-^{\vee} \to \mcV^{\vee}|_W \to 0]$ 
to the class  $[0\to O_W(-1,1) \to \mcE \to \pi_+^*\mcS_+^{\vee} \to 0]$,
is injective.
Since the dual spinor bundle $\mcS_-^{\vee}$ is indecomposable, the exact sequence $0\to O_W(-1,1) \to \pi_-^* \mcS_-^{\vee} \to \mcV^{\vee}|_W \to 0$ does not split, and hence the corresponding class in $\Ext_W^1(\pi_+^*\mcS^{\vee}, \mcO_W(-1,1))$ is non-zero.
Thus the extension $0 \to \mcO_W(-1,1) \to \mcE \to \pi_+^* \mcS_+^{\vee} \to 0$ does not split.

Swapping the roles of $\mcS_-$ and $\mcS_+$ in the above argument proves that the second sequence does not split, either. 
\end{proof}

The above two exact sequences and the diagram will be used to exchange two tilting bundles over both sides of the flop in Section \ref{sect: D-eq}.
The diagram of the exact sequences over $W$ will be referred as the \textit{exchange diagram}.
The bundle $\mcE$ over $W$ will be referred as the \textit{key bundle}.

\section{Proof of the derived equivalence}

This section gives the proof of the main theorem.

\subsection{Borel--Bott--Weil computations}
Let $\mcS$ be the spinor bundle over $\Q^5$.
The five dimensional quadric $\Q^5$ is isomorphic to a rational homogeneous manifold $\Spin(7)/P(\alpha_1)$ of Dynkin type $B_3$, where $P(\alpha_1)$ is the parabolic subgroup of $\Spin(7)$ corresponding to the simple root $\alpha_1$ (cf.~\cite[Section 1.4]{Ott90}).
The weight lattice for the representations of $\Spin(7)$ is spanned by three weights $L_1, L_2, L_3$ and an additional weight $(L_1+L_2+L_3)/2$ (cf.~\cite[Section 19.3]{FH91}).
Thus a weight vector can be identified with $(k_1/2, k_2/2, k_3/2)$ such that $k_1, k_2, k_3$ are all odd integers or all even integers.

Fix an identification $\Q^5 \simeq \Spin(7)/P(\alpha_1)$.
If $\mcF$ is an irreducible homogeneous vector bundle over $\Q^5 \simeq \Spin(7)/P(\alpha_1)$, 
it corresponds to a weight $(k_1/2, k_2/2, k_3/2)$.
Conversely, a weight uniquely determines an irreducible homogeneous bundle, if it exists.
Thus it is allowed to denote as
\[ \mcF = \mcF\left(\frac{k_1}{2}, \frac{k_2}{2}, \frac{k_3}{2}\right). \]
Under this notation, 
\begin{center} 
$\mcO_{\Q^5}(a) = \mcF(a,0,0)$ and $\mcS^{\vee} = \mcF\left(\frac{1}{2}, \frac{1}{2}, \frac{1}{2}\right)$.
\end{center}

\begin{lem} \label{BBW}
The following hold.
\begin{enumerate}
\item[(1)] $H^i(\Q^5, \Sym^k \mcS^{\vee} \otimes \mcO_{\Q^5}(k+j)) = 0$ for all $i > 0$, $k \geq 0$ and $j \geq - 4$.
\item[(2)] $H^i(\Q^5, \Sym^k \mcS^{\vee} \otimes  \mcS^{\vee} \otimes \mcO_{\Q^5}(k+j)) = 0$ for all $i > 0$, $k \geq 0$ and $j \geq - 3$ except $(i,j,k) = (1,-3, 1)$.
\item[(3)] $H^1(\Q^5, \mcS^{\vee} \otimes \mcS^{\vee}(-2)) \simeq \C$.
\item[(4)] $H^i(\Q^5, \Sym^k \mcS^{\vee} \otimes \mcS^{\vee} \otimes \mcS \otimes \mcO(k+j)) = 0$ for all $i > 0$, $k \geq 0$ and $j \geq 0$.
\end{enumerate}
\end{lem}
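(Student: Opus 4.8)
The plan is to deduce all four statements from the Borel--Bott--Weil theorem on $\Q^5 = \Spin(7)/P(\alpha_1)$, after rewriting each bundle as a direct sum of irreducible homogeneous bundles $\mcF(k_1/2,k_2/2,k_3/2)$. The guiding principle is that tensoring by $\mcO_{\Q^5}(a) = \mcF(a,0,0)$ shifts only the central ($L_1$) weight, so once the symmetric powers and tensor products are decomposed into irreducibles, every bundle in (1)--(4) becomes an explicit sum of $\mcF(\lambda)$'s to which Borel--Bott--Weil applies summand by summand.

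The decisive preliminary step is therefore representation-theoretic. I would use that the semisimple part of the Levi of $P(\alpha_1)$ is of type $B_2 = \Spin(5) \cong Sp(4)$, and that under this Levi the bundle $\mcS^{\vee}$ corresponds to the standard $4$-dimensional symplectic representation carrying the central weight $L_1 = 1/2$. Since the symmetric powers of the standard representation of $Sp(4)$ are irreducible, this yields the clean identity $\Sym^k \mcS^{\vee} \cong \mcF(k/2,k/2,k/2)$. I would likewise record $\mcS \cong \mcS^{\vee}(-1) = \mcF(-1/2,1/2,1/2)$ (the standard relation on an odd quadric) together with the Clebsch--Gordan decompositions $\Sym^k\mcS^{\vee} \otimes \mcS^{\vee} \cong \mcF(\tfrac{k+1}{2},\tfrac{k+1}{2},\tfrac{k+1}{2}) \oplus \mcF(\tfrac{k+1}{2},\tfrac{k+1}{2},\tfrac{k-1}{2}) \oplus \mcF(\tfrac{k+1}{2},\tfrac{k-1}{2},\tfrac{k-1}{2})$ for $k \ge 1$ (only the first summand for $k=0$) and $\mcS^{\vee} \otimes \mcS \cong \mcF(0,1,1) \oplus \mcF(0,1,0) \oplus \mcO_{\Q^5}$, all read off from the $Sp(4)$ tensor product rules.

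With each bundle written as $\bigoplus \mcF(\lambda)$, for every summand I compute $\lambda + \rho$ with $\rho = (5/2,3/2,1/2)$ and invoke Borel--Bott--Weil: higher cohomology vanishes exactly when $\lambda + \rho$ is either singular (some $\mu_i = \pm \mu_{i'}$ or $\mu_i = 0$) or regular and dominant. Running the elementary case analysis over the stated ranges, every summand with $k+j < 0$ turns out to be singular, and every summand with $k+j \ge 0$ is regular dominant, with one exception: the middle summand $\mcF(\tfrac{k+1}{2},\tfrac{k+1}{2},\tfrac{k-1}{2})$ of $\Sym^k\mcS^{\vee} \otimes \mcS^{\vee}$ at $(k,j) = (1,-3)$. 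There $\lambda + \rho = (3/2,5/2,1/2)$ is regular but not dominant; the unique Weyl element making it dominant is the single reflection $s_{\alpha_1}$ of length one, so the cohomology lives in degree $1$, and since $s_{\alpha_1}(\lambda+\rho) - \rho = (5/2,3/2,1/2) - \rho = 0$ the associated $\Spin(7)$-representation is trivial and $H^1 \cong \C$. This single weight simultaneously produces the exception in (2) and the isomorphism in (3); part (4) is the same machinery applied to the three summands coming from $\mcS^{\vee} \otimes \mcS$, where a slightly longer but entirely routine case-check over $k,j \ge 0$ gives the vanishing of the higher cohomology.

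I expect the only real obstacle to be the bookkeeping in the representation-theoretic step: establishing the irreducibility of $\Sym^k\mcS^{\vee}$ and pinning down the Clebsch--Gordan coefficients of the spinor tensor products exactly. Once every bundle is expressed as $\bigoplus \mcF(\lambda)$ the Borel--Bott--Weil analysis is mechanical, the one point demanding care being not to overlook a regular-but-non-dominant weight --- precisely the weight in which the entire exceptional content of (2) and (3) resides.
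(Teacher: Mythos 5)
Your proposal is correct and takes essentially the same route as the paper: decompose each bundle into irreducible homogeneous bundles $\mcF(\lambda)$ via the $Sp(4)$-structure of the Levi factor and the Littlewood--Richardson rule, then apply Borel--Bott--Weil summand by summand, with the unique regular-but-non-dominant weight at $(k,j)=(1,-3)$ accounting for both the exception in (2) and the isomorphism in (3) (the paper obtains this last point from the known identity $\RHom_{\Q^5}(\mcS^{\vee},\mcS)\simeq \C[-1]$, but explicitly notes it also follows from Borel--Bott--Weil, as you do). One small inaccuracy in your summary of the case analysis: it is not true that every summand with $k+j<0$ is singular --- for instance in (2) at $(k,j)=(1,-2)$ the third summand is the trivial weight $(0,0,0)$, which is regular dominant --- but since regular dominant weights also have vanishing higher cohomology, this does not affect the conclusion.
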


\begin{proof}
Basically this lemma is a consequence of the Borel--Bott--Weil theorem (cf.~\cite[Section 4.3]{Weyman03}).
For simplicity, let $\left(\frac{k_1}{2}, \frac{k_2}{2}, \frac{k_3}{2}\right)$ denote the irreducible homogeneous bundle $\mcF\left(\frac{k_1}{2}, \frac{k_2}{2}, \frac{k_3}{2}\right)$.

For (1), note that $\Sym^k \mcS^{\vee} \otimes \mcO_{\Q^5}(k+j) = \left(\frac{3k+2j}{2}, \frac{k}{2}, \frac{k}{2}\right)$.
If $k \geq 0$ and $j \geq - 4$, the weight is dominant except when 
\[ (k,j) \in \{(0,-4), (0,-3), (0,-2), (0,-1),(1,-4), (1,-3), (1,-2), (2,-4), (2,-3), (3,-4) \}. \]
If $(k,j)$ is one of the above ten, then the bundle is one of 
\begin{center}
$(-4,0,0)$, $(-3,0,0)$, $(-2,0,0)$, $(-1,0,0)$, $\left(-\frac{5}{2}, \frac{1}{2}, \frac{1}{2}\right)$, $\left(-\frac{3}{2}, \frac{1}{2}, \frac{1}{2}\right)$, $\left(-\frac{1}{2}, \frac{1}{2}, \frac{1}{2}\right)$, $(-1,1,1)$, $(0,1,1)$, and $\left(\frac{1}{2}, \frac{3}{2}, \frac{3}{2}\right)$.
\end{center}
Under the dotted action of the Weyl group on the space of weights (cf.~\cite[Section 4.3]{Weyman03}), all the above weights have non-trivial stabilizers, 
and hence the corresponding vector bundles are acyclic. 
This shows (1).

For (2) and (3), the Littlewood--Richardson rule (cf.~\cite[Section 25.3]{FH91}) yields
\begin{align*}
&\Sym^k \mcS^{\vee} \otimes  \mcS^{\vee} \otimes \mcO_{\Q^5}(k+j) \\
&\simeq  \begin{cases}
\left(\frac{2j+1}{2}, \frac{1}{2}, \frac{1}{2}\right) & \text{if $k=0$} \\
\left(\frac{3k+2j+1}{2}, \frac{k+1}{2}, \frac{k+1}{2}\right) \oplus \left(\frac{3k+2j+1}{2}, \frac{k+1}{2}, \frac{k-1}{2}\right)
\oplus \left(\frac{3k+2j+1}{2}, \frac{k-1}{2}, \frac{k-1}{2}\right) &\text{if $k \geq 1$.}
\end{cases}
\end{align*}
If $k \geq 0$ and $j \geq -3$,
as for (1), one can see that all weights are dominant or give acyclic bundles except when $(k,j) = (1,-3)$.
In this exceptional case, the bundle we consider is $\mcS^{\vee} \otimes \mcS^{\vee}(-2)$.
Now it is well-known that
\[  \RG(\Q^5, \mcS^{\vee} \otimes \mcS^{\vee}(-2)) = \RHom_{\Q^5}(\mcS^{\vee}, \mcS) \simeq \C[-1]. \]
(Of course, one can also deduce this from the Borel--Bott--Weil theorem.)
This completes the proof of (2) and (3).

It remains to show (4).
First, note that 
\[ \mcS^{\vee} \otimes \mcS \simeq (0,1,1) \oplus (0,1,0) \oplus (0,0,0). \]
It holds that $(0,1,1) \simeq \Sym^2 \mcS^{\vee} \otimes \mcO(-1)$ and 
that $(0,1,0) \simeq T_{\Q^5}(-1)$, where $T_{\Q^5}$ is the tangent bundle.
The Littlewood--Richardson rule gives
\begin{align*}
&\Sym^k \mcS^{\vee} \otimes  \Sym^2 \mcS^{\vee} \otimes \mcO_{\Q^5}(k+j-1) \\
&\simeq  \begin{cases}
(j, 1, 1) & \text{if $k=0$} \\
\left(\frac{3+2j}{2}, \frac{3}{2}, \frac{3}{2}\right) \oplus \left(\frac{3+2j}{2}, \frac{3}{2}, \frac{1}{2}\right) \oplus \left(\frac{3+2j}{2}, \frac{1}{2}, \frac{1}{2}\right) & \text{if $k=1$} \\
\left(\frac{3k+2j}{2}, \frac{k+2}{2}, \frac{k+2}{2}\right) \oplus \left(\frac{3k+2j}{2}, \frac{k+2}{2}, \frac{k}{2}\right)
\oplus \left(\frac{3k+2j}{2}, \frac{k +2}{2}, \frac{k - 2}{2}\right) & \\
\oplus \left(\frac{3k+2j}{2}, \frac{k}{2}, \frac{k}{2}\right) \oplus \left(\frac{3k+2j}{2}, \frac{k}{2}, \frac{k-2}{2}\right)
\oplus \left(\frac{3k+2j}{2}, \frac{k -2}{2}, \frac{k - 2}{2}\right) & \text{if $k \geq 2$}
\end{cases} 
\\
&\Sym^k \mcS^{\vee} \otimes  T_{\Q^5} \otimes \mcO_{\Q^5}(k+j-1) \\
&\simeq  \begin{cases}
(j, 1, 0) & \text{if $k=0$} \\
\left(\frac{3+2j}{2}, \frac{3}{2}, \frac{1}{2}\right) \oplus \left(\frac{3+2j}{2}, \frac{1}{2}, \frac{1}{2}\right) & \text{if $k=1$} \\
\left(\frac{3k+2j}{2}, \frac{k+2}{2}, \frac{k}{2}\right) \oplus \left(\frac{3k+2j}{2}, \frac{k}{2}, \frac{k}{2}\right)
\oplus \left(\frac{3k+2j}{2}, \frac{k}{2}, \frac{k - 2}{2}\right) & \text{if $k \geq 2$}
\end{cases} \\
&\Sym^k \mcS^{\vee} \otimes  \mcO_{\Q^5}(k+j) \simeq \left(\frac{3k+2j}{2}, \frac{k}{2}, \frac{k}{2}\right).
\end{align*}
If $k \geq 0$ and $j \geq 0$, one can see that all above weights are dominant or give acyclic bundles.
This shows (4).
\end{proof}

\subsection{Construction of tilting bundles} \label{sect: tilting}

Let $\mcG$ be an Ottaviani bundle over $\Q^5$, and put
\[ X = \Tot_{\Q^5}(\mcG^{\vee}(-1)) = \Spec_{\Q^5} \Sym^{\bullet} \mcG(1). \]
The variety $X$ is a local Calabi--Yau eightfold with a flopping contraction $f \colon X \to \Spec H^0(X,\mcO_X)$.

The aim of the present section is to construct tilting bundles over the variety $X$.
Recall that the notion of tilting bundle is defined as follows.

\begin{defi} \label{def:tilting}
Let $Z$ be a scheme that is projective over an affine Noetherian scheme.
\begin{enumerate}
\item[(1)] A \textit{pretilting} bundle over $Z$ is a locally free sheaf $\mcE$ of finite rank such that $\Ext^i_Z(\mcE, \mcE) = 0$ for all $i > 0$.
\item[(2)] A \textit{tilting} bundle over $Z$ is a pretilting bundle $\mcT$ that generates the (unbounded) derived category $\D(\Qcoh Z)$ of quasi-coherent sheaves over $Z$, i.e.~if $F \in \D(\Qcoh Z)$ satisfies $\RHom_Z(\mcT, F) = 0$, then $F = 0$.
\end{enumerate}
\end{defi}

Let $\mcO_X(a)$ and $\mcS_X$ denote the pull-back of the line bundle $\mcO_{\Q^5}(a)$ and the spinor bundle $\mcS$ on $\Q^5$ by the projection $X \to \Q^5$.

\begin{lem} \label{BBW2}
The following hold.
\begin{enumerate}
\item[(1)] $H^i(\Q^5, \Sym^k(\mcG(1)) \otimes \mcO_{\Q^5}(j)) = 0$ for all $i>0$, $k \geq 0$ and $j \geq -4$.
\item[(2)] $H^i(\Q^5, \Sym^k(\mcG(1)) \otimes \mcS^{\vee}(j)) = 0$ for all $i>0$, $k \geq 0$ and $j \geq -3$, except when $(i,j,k) = (1,-3,1)$.
\item[(3)] $H^1(\Q^5, \mcG(1) \otimes \mcS^{\vee}(-3)) = \C$.
\item[(4)] $H^i(\Q^5, \Sym^k(\mcG(1)) \otimes \mcS^{\vee} \otimes \mcS) = 0$ for all $i > 0$ and $k \geq 0$.
\end{enumerate}
\end{lem}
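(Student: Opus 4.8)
The plan is to reduce every statement to the matching vanishing in Lemma~\ref{BBW} by using the defining sequence $0 \to \mcO \to \mcS^{\vee} \to \mcG \to 0$ of the Ottaviani bundle, which exhibits $\mcG(1)$ as the quotient of $\mcS^{\vee}(1)$ by the line bundle $\mcO(1)$. For any short exact sequence $0 \to L \to V \to Q \to 0$ with $L$ a line bundle, the symmetric-algebra surjection $\Sym^{\bullet} V \to \Sym^{\bullet} Q$ has kernel the ideal generated by $L$, and since $L$ is locally free of rank one this gives in degree $k$ a short exact sequence $0 \to L \otimes \Sym^{k-1} V \to \Sym^k V \to \Sym^k Q \to 0$. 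Applying this to $L = \mcO(1)$, $V = \mcS^{\vee}(1)$, $Q = \mcG(1)$ and using $\Sym^k(\mcS^{\vee}(1)) \simeq \Sym^k \mcS^{\vee} \otimes \mcO(k)$ produces, for every $k \geq 1$, the fundamental sequence
\[ 0 \to \Sym^{k-1}\mcS^{\vee} \otimes \mcO(k) \to \Sym^k \mcS^{\vee} \otimes \mcO(k) \to \Sym^k(\mcG(1)) \to 0. \]
Call this sequence $(\ast)$; the case $k = 0$ is trivial since $\Sym^0(\mcG(1)) = \mcO$.

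For parts (1), (2) and (4) I would tensor $(\ast)$ by $\mcO(j)$, by $\mcS^{\vee}(j)$, and by $\mcS^{\vee} \otimes \mcS$ respectively, and then pass to the long exact sequence in cohomology. In each case the middle term is precisely the bundle controlled by the corresponding part of Lemma~\ref{BBW} at parameter $(k,j)$, while the kernel term is the same kind of bundle with $k$ lowered by one and the twist raised by one, since $\mcO(k) = \mcO((k-1)+1)$. Thus $H^i$ of the quotient vanishes once $H^i$ of the middle term and $H^{i+1}$ of the kernel term both vanish. Concretely, part (1) uses Lemma~\ref{BBW}(1) at $(k,j)$ and at $(k-1,j+1)$, both in range because $j \geq -4$ forces $j+1 \geq -4$; part (4) uses Lemma~\ref{BBW}(4) at $j=0$ and $j=1$, both nonnegative; and part (2) uses Lemma~\ref{BBW}(2) at $(k,j)$ and $(k-1,j+1)$, where a direct check shows that the kernel exception $(i+1,j+1,k-1)=(1,-3,1)$ would require $i=0$, $j=-4$, lying outside the range $i>0$, $j\ge-3$, so the only surviving contribution is the middle term at $(i,j,k)=(1,-3,1)$, which is exactly the excepted case in the statement.

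Part (3) is this very exceptional case, and I would compute it directly. Taking $(\ast)$ with $k=1$, namely $0 \to \mcO(1) \to \mcS^{\vee}(1) \to \mcG(1) \to 0$, and tensoring by $\mcS^{\vee}(-3)$ yields
\[ 0 \to \mcS^{\vee}(-2) \to \mcS^{\vee} \otimes \mcS^{\vee}(-2) \to \mcG(1) \otimes \mcS^{\vee}(-3) \to 0. \]
The weight of $\mcS^{\vee}(-2)$ already occurs in the acyclic list in the proof of Lemma~\ref{BBW}(1), so $\mcS^{\vee}(-2)$ has vanishing cohomology in all degrees; the long exact sequence then gives an isomorphism $H^{\bullet}(\mcG(1) \otimes \mcS^{\vee}(-3)) \simeq H^{\bullet}(\mcS^{\vee} \otimes \mcS^{\vee}(-2))$, and the latter is $\C$ concentrated in degree $1$ by Lemma~\ref{BBW}(3). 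Hence $H^1(\Q^5, \mcG(1) \otimes \mcS^{\vee}(-3)) = \C$.

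Once $(\ast)$ is in place the arguments are essentially mechanical, so I do not expect a serious obstacle. The one point demanding genuine care is the index bookkeeping in the kernel terms: the shift $k \mapsto k-1$ is compensated by the twist $j \mapsto j+1$, and one must verify at each step that the shifted parameters remain within the admissible range of Lemma~\ref{BBW} and never meet its exceptional locus, the sole exception being in part (2), where the single exceptional value is both accounted for in the statement and separately evaluated in part (3).
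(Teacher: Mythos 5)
Your proposal is correct and takes essentially the same route as the paper: both prove the lemma by passing to symmetric powers of the defining sequence $0 \to \mcO_{\Q^5} \to \mcS^{\vee} \to \mcG \to 0$ to obtain $0 \to (\Sym^{k-1}\mcS^{\vee})(k) \to (\Sym^{k}\mcS^{\vee})(k) \to \Sym^k(\mcG(1)) \to 0$ and then invoking Lemma~\ref{BBW}. The paper leaves the long-exact-sequence bookkeeping implicit (``Now the result follows from Lemma~\ref{BBW}''), whereas you carry it out in full, including the check that the exceptional case $(1,-3,1)$ never enters through the kernel terms and the direct evaluation in part (3); this is exactly the verification the paper's one-line conclusion presupposes.
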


\begin{proof}
Recall that an Ottaviani bundle $\mcG$ fits in an exact sequence $0 \to \mcO_{\Q^5} \to \mcS^{\vee} \to \mcG \to 0$.
Taking the symmetric power gives exact sequences
\[ 0 \to (\Sym^{k -1}\mcS^{\vee})(k) \to (\Sym^{k}\mcS^{\vee})(k) \to \Sym^k(\mcG(1)) \to 0 \]
for all $k \geq 0$.
Now the result follows from Lemma~\ref{BBW}.
\end{proof}

\begin{lem} \label{pretilting}
The following holds.
\begin{enumerate}
\item[(1)] $H^i(X, \mcO_X(j)) = 0$ for all $i > 0$ and $j \geq -4$.
\item[(2)] $H^i(X, \mcS_X^{\vee}(j)) = 0$ for all $i>0$ and $j \geq -3$ except when $(i,j) = (1,-3)$.
\item[(3)] $H^1(X, \mcS_X^{\vee}(-3)) = \C$.
\item[(4)] $\Ext_X^i(\mcS_X, \mcS_X) = 0$ for all $i>0$.
\end{enumerate}
\end{lem}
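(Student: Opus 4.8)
The plan is to exploit the fact that the structure morphism $\pi \colon X \to \Q^5$ is affine. By construction $\pi_* \mcO_X \simeq \Sym^{\bullet} \mcG(1) = \bigoplus_{k \geq 0} \Sym^k(\mcG(1))$, and since $\pi$ is affine its higher direct images vanish. Crucially, every sheaf appearing in the statement is a pullback along $\pi$ of a sheaf on $\Q^5$: we have $\mcO_X(j) = \pi^* \mcO_{\Q^5}(j)$, $\mcS_X^{\vee}(j) = \pi^*(\mcS^{\vee}(j))$, and $\mcS_X^{\vee} \otimes \mcS_X = \pi^*(\mcS^{\vee} \otimes \mcS)$. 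So the whole lemma should reduce to Lemma~\ref{BBW2} by a single computational device.

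First I would record that device. For a locally free sheaf $\mcF$ on $\Q^5$, the projection formula together with the vanishing of higher direct images (via the Leray spectral sequence) gives
\[ H^i(X, \pi^* \mcF) \simeq H^i(\Q^5, \mcF \otimes \pi_* \mcO_X) \simeq \bigoplus_{k \geq 0} H^i\!\left(\Q^5, \mcF \otimes \Sym^k(\mcG(1))\right). \]
Then the four parts follow by choosing $\mcF$ appropriately. For (1), take $\mcF = \mcO_{\Q^5}(j)$; each summand vanishes for $i > 0$ and $j \geq -4$ by Lemma~\ref{BBW2}(1). For (2), take $\mcF = \mcS^{\vee}(j)$; by Lemma~\ref{BBW2}(2) every summand vanishes for $i > 0$, $j \geq -3$ except the one with $(i,j,k) = (1,-3,1)$, which is precisely the claimed exception $(i,j) = (1,-3)$. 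For (3), the same decomposition with $\mcF = \mcS^{\vee}(-3)$ and $i=1$ leaves only the $k = 1$ summand $H^1(\Q^5, \mcG(1) \otimes \mcS^{\vee}(-3)) \simeq \C$ by Lemma~\ref{BBW2}(3), so $H^1(X, \mcS_X^{\vee}(-3)) \simeq \C$. For (4), since $\mcS_X$ is locally free we have $\Ext_X^i(\mcS_X, \mcS_X) \simeq H^i(X, \mcS_X^{\vee} \otimes \mcS_X) \simeq \bigoplus_{k} H^i(\Q^5, \Sym^k(\mcG(1)) \otimes \mcS^{\vee} \otimes \mcS)$, which vanishes for $i > 0$ by Lemma~\ref{BBW2}(4).

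I do not expect a serious obstacle here: the statement is essentially bookkeeping once the affine pushforward decomposition is in place, and all the genuine content has been pushed into the Borel--Bott--Weil computations of Lemma~\ref{BBW2}. The only point requiring a word of care is the a~priori infinite direct sum: for each fixed pair $(i,j)$ with $i>0$, Lemma~\ref{BBW2} guarantees that at most one summand (indexed by $k$) is nonzero, so the cohomology is automatically finite-dimensional and equals the indicated value, and no convergence or finiteness issue arises.
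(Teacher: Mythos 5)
Your proposal is correct and follows exactly the paper's argument: the paper likewise reduces everything to Lemma~\ref{BBW2} via the adjunction/projection-formula isomorphism $H^i(X, \pi^*\mcF) \simeq H^i\bigl(\Q^5, \mcF \otimes \bigoplus_{k \geq 0} \Sym^k(\mcG(1))\bigr)$ for the affine morphism $\pi \colon X \to \Q^5$. Your write-up just makes explicit the case-by-case bookkeeping (including the identification $\Ext_X^i(\mcS_X,\mcS_X) \simeq H^i(X, \mcS_X^{\vee} \otimes \mcS_X)$ and the finiteness of the sum) that the paper leaves implicit.
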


\begin{proof}
If $\mcF$ is a vector bundle over $\Q^5$ and $\mcF_X$ denotes the pull-back of $\mcF$ by $X \to \Q^5$,
the adjunction gives
\[ H^i(X, \mcF_X) = H^i\left(\Q^5, \mcF \otimes \bigoplus_{k \geq 0} \Sym^k(\mcG(1))\right). \]
Thus the result follows from Lemma~\ref{BBW2}.
\end{proof}

By the above lemma, it follows that $\Ext_X^1(\mcS_X^{\vee}, \mcO_X(-2)) \simeq H^1(X, \mcS_X^{\vee}(-3)) = \C$.
Let
\[ 0 \to \mcO_X(-2) \to \mcP \to \mcS_X^{\vee} \to 0 \]
be the unique non-trivial extension.

\begin{thm} \label{thm tilting}
The bundles
\begin{align*}
\mcT &:= \mcO_X(-2) \oplus \mcO_X(-1) \oplus \mcO_X \oplus \mcO_X(1) \oplus \mcO_X(2) \oplus \mcP, ~ \text{and} \\
\mcT^{\vee} &= \mcO_X(-2) \oplus \mcO_X(-1) \oplus \mcO_X \oplus \mcO_X(1) \oplus \mcO_X(2) \oplus \mcP^{\vee}
\end{align*}
are tilting bundles over $X$.
\end{thm}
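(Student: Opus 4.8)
The plan is to verify the two defining properties of a tilting bundle separately for $\mcT$: the vanishing $\Ext^i_X(\mcT, \mcT) = 0$ for $i>0$ (pretilting), and the generation of $\D(\Qcoh X)$. I will treat $\mcT$ in detail, and deduce the statement for $\mcT^\vee$ by dualizing. For pretilting I would reduce everything to $\Ext$-groups between the indecomposable summands and evaluate them using Lemma~\ref{pretilting} together with the defining sequence $0 \to \mcO_X(-2) \to \mcP \to \mcS_X^\vee \to 0$. Between line bundles, $\Ext^i_X(\mcO_X(a), \mcO_X(b)) = H^i(X, \mcO_X(b-a))$ with $b-a \in \{-4, \dots, 4\}$, so these vanish for $i>0$ by part~(1). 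For the groups involving $\mcP$ I would apply the relevant $\Hom$-functors to the defining sequence and feed in the remaining parts of the lemma, using $\Ext^i_X(\mcS_X^\vee, \mcS_X^\vee) = \Ext^i_X(\mcS_X, \mcS_X)$ for part~(4) and the identity $\mcS^\vee \simeq \mcS(1)$ to rewrite $\Ext^i_X(\mcS_X^\vee, \mcO_X(a)) = H^i(X, \mcS_X^\vee(a-1))$.

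Every term is then covered by the vanishing ranges in (1)--(2) except for the single group $\Ext^1_X(\mcS_X^\vee, \mcO_X(-2)) \simeq \C$ arising from the exception in (2)--(3). This surviving class is precisely the reason $\mcP$ is introduced, and I expect it to cancel in the long exact sequence computing $\Ext^\bullet_X(\mcP, \mcO_X(-2))$: there the connecting map $\Hom_X(\mcO_X(-2), \mcO_X(-2)) \to \Ext^1_X(\mcS_X^\vee, \mcO_X(-2))$ is Yoneda multiplication by the extension class of $\mcP$, so it carries $\id$ to a generator of $\Ext^1_X(\mcS_X^\vee, \mcO_X(-2)) \simeq \C$; since $\mcP$ is the non-trivial extension this map is surjective, killing the offending $\C$ and giving $\Ext^1_X(\mcP, \mcO_X(-2)) = 0$. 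Granting this cancellation, all mixed groups $\Ext^{>0}_X(\mcP, \mcO_X(a))$, $\Ext^{>0}_X(\mcO_X(a), \mcP)$ and $\Ext^{>0}_X(\mcP, \mcP)$ vanish, so $\mcT$ is pretilting.

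For generation I would use that $\pi \colon X \to \Q^5$ is affine, being the total space of a vector bundle, so $\pi_*$ is exact and conservative and $\RHom_X(\pi^* G, F) \simeq \RHom_{\Q^5}(G, \pi_* F)$ by adjunction; hence the pullbacks of any generating set of $\D(\Qcoh \Q^5)$ generate $\D(\Qcoh X)$. Suppose $F \in \D(\Qcoh X)$ satisfies $\RHom_X(\mcT, F) = 0$. Then $\RHom_X(\mcO_X(a), F) = 0$ for $a \in \{-2, \dots, 2\}$ and $\RHom_X(\mcP, F) = 0$, and the defining triangle of $\mcP$ forces $\RHom_X(\mcS_X^\vee, F) = 0$. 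Feeding $F$ through the twists of Ottaviani's spinor sequence $0 \to \mcS_X \to \mcO_X^{\oplus 8} \to \mcS_X^\vee \to 0$ propagates the vanishing to $\RHom_X(\mcS_X(j), F) = 0$ for $-2 \le j \le 1$, in particular to the twist of the spinor bundle appearing in a recentred full exceptional collection of $\Q^5$ whose line-bundle part is $\mcO_{\Q^5}(-2), \dots, \mcO_{\Q^5}(2)$ \cite{Kap88}. By the adjunction and the conservativity of $\pi_*$ this yields $\pi_* F = 0$ and hence $F = 0$, so $\mcT$ generates $\D(\Qcoh X)$.

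Finally, for $\mcT^\vee$ I would dualize: since $X$ is smooth, $\RHom_X(-, \mcO_X)$ is an anti-autoequivalence of $\Db(\coh X)$ carrying $\mcT$ to $\mcT^\vee$, so pretilting is immediate from $\Ext^i_X(\mcT^\vee, \mcT^\vee) \simeq \Ext^i_X(\mcT, \mcT)$, and generation follows by running the argument above with the dual sequence $0 \to \mcS_X \to \mcP^\vee \to \mcO_X(2) \to 0$ and the same spinor sequences. The step I expect to be the real obstacle is the generation argument: once the connecting-map cancellation is observed, pretilting reduces mechanically to Lemma~\ref{pretilting}, whereas generation requires the genuinely external inputs, namely Kapranov's exceptional collection on $\Q^5$, the affine pushforward argument, and the spinor sequence used to migrate between twists of $\mcS_X$.
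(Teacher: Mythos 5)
Your proposal is correct and follows essentially the same route as the paper: the paper's proof simply delegates your two main steps to citations, namely the affine pull-back/adjunction generation criterion (\cite[Lemma~3.1]{Hara21}, your $\pi_*$-conservativity argument with Kapranov's collection) and the connecting-map cancellation that kills the unique class in $\Ext^1_X(\mcS_X^\vee,\mcO_X(-2))$ (the proof of \cite[Lemma~2.4]{Hara21}). The only cosmetic difference is the order: the paper establishes tilting for $\mcT^{\vee}$ first via the extension $0 \to \mcS_X \to \mcP^{\vee} \to \mcO_X(2) \to 0$ and then dualizes, whereas you treat $\mcT$ first, using in both cases the same spinor sequence and cone arguments to transfer generation.
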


\begin{proof}
By Kapranov \cite{Kap88}, it is known that the bundle
\[ \mcO_{\Q^5}(-2) \oplus \mcO_{\Q^5}(-1) \oplus \mcS \oplus \mcO_{\Q^5} \oplus \mcO_{\Q^5}(1) \oplus \mcO_{\Q^5}(2) \]
is a tilting bundle over $\Q^5$, and in particular it generates $\D(\Qcoh \Q^5)$.
Thus, by \cite[Lemma~3.1]{Hara21}, the bundle
\[ \mcU := \mcO_X(-2) \oplus \mcO_X(-1) \oplus \mcS_X \oplus \mcO_X \oplus \mcO_X(1) \oplus \mcO_X(2) \]
also generates $\D(\Qcoh X)$.

Consider the collection of vector bundles appearing as direct summands of $\mcU$, taken in the order given above.
Then, by Lemma~\ref{pretilting}, this collection consists of pretilting bundles.
Furthermore, there are no non-trivial higher $\Ext$'s to the right, 
and when viewed to the left, no non-trivial $\Ext^i$ with $i \ge 2$ appear. 
Therefore, it is able to apply \cite[Lemma~2.4]{Hara21} to construct a tilting bundle.

Moreover, the only nontrivial higher extension is $\Ext_X^1(\mcO_X(2), \mcS_X) = \C$,
and the corresponding exact sequence is
$0 \to \mcS_X \to \mcP^{\vee} \to \mcO_X(2) \to 0$.
Thus the proof of \cite[Lemma~2.4]{Hara21} shows that the resulting tilting bundle over $X$ is the bundle
\[ \mcT^{\vee} = \mcO_X(-2) \oplus \mcO_X(-1) \oplus \mcO_X \oplus \mcO_X(1) \oplus \mcO_X(2) \oplus \mcP^{\vee}. \]
The dual of this bundle $\mcT$
is also a pretilting bundle, and since $\mcT$ can generate the shifts of $\mcS_X^{\vee}$ and $\mcS_X$ by taking cones, 
$\mcT$ is also a generator of $\D(\Qcoh X)$, and hence is tilting.
\end{proof}

\subsection{Derived equivalence} \label{sect: D-eq}

This section completes the proof of Theorem~\ref{main thm}.
Recall from Section~\ref{sect: flop} that the diagram of the flop is 
\[ \begin{tikzcd}
 & E = W \arrow[d, hook] \arrow[dl, "\pi_+"'] \arrow[dr, "\pi_-"]&  \\
H_+^0 \arrow[d, hook] & \wX = \Tot_W(\mcO_W(-1,-1)) \arrow[dl, "\phi_+"', "\Bl_{H_+^0}"] \arrow[dr, "\phi_-", "\Bl_{H_-^0}"'] & H_-^0 \arrow[d, hook] \\
X_+ = \Tot_{H_+}(\mcG_+^{\vee}(-1)) \arrow[d] \arrow[rr, dashed, "\text{flop}"'] \arrow[dr, "f_+"'] & & X_- = \Tot_{H_-}(\mcG_-^{\vee}(-1)) \arrow[d] \arrow[dl, "f_-"] \\
 H_+ = \Q^5 & \Spec R & H_- = \Q^5.
\end{tikzcd} \]
As in Section~\ref{sect: tilting}, let $\mcS_{X_{\pm}}$ be the pull-back of the spinor bundle over $H_{\pm}$ by the projection $X_{\pm} \to H_{\pm}$,
and $\mcP_{\pm}$ the bundles that lie in the unique non-trivial extensions
\[ 0 \to \mcO_{X_{\pm}}(-2) \to \mcP_{\pm} \to \mcS_{X_{\pm}} \to 0. \]
Put
\[ \mcT_{\pm} := \mcO_{X_{\pm}}(-2) \oplus \mcO_{X_{\pm}}(-1) \oplus \mcO_{X_{\pm}} \oplus \mcO_{X_{\pm}}(1) \oplus \mcO_{X_{\pm}}(2) \oplus \mcP_{\pm}. \]
By Theorem~\ref{thm tilting}, $\mcT_{\pm}$ and $\mcT_{\pm}^{\vee}$ are tilting bundles over $X_{\pm}$.

\begin{thm} \label{tilting exchange}
There are isomorphisms
\begin{center}
${f_+}_*\mcT_+ \simeq {f_-}_*\mcT_-$ and ${f_+}_*\mcT_+^{\vee} \simeq {f_-}_*\mcT_-^{\vee}$
\end{center}
of $R$-modules.
\end{thm}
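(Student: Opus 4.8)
The plan is to compare the two push-forwards over the open locus where the flopping contractions are isomorphisms, and then to propagate the identification over the whole base by reflexivity. Write $o \in \Spec R$ for the image of the contracted zero-sections and set $U := \Spec R \setminus \{o\}$; as $f_{\pm}$ is a small (flopping) contraction it is an isomorphism over $U$, and $\codim(\{o\}, \Spec R) = \dim R = 8 \ge 2$. Since $\mcT_{\pm}$ and $\mcT_{\pm}^{\vee}$ are tilting bundles on the crepant resolutions $f_{\pm} \colon X_{\pm} \to \Spec R$ (Theorem~\ref{thm tilting}), the $R$-modules $(f_{\pm})_*\mcT_{\pm} = \Gamma(X_{\pm}, \mcT_{\pm})$, and likewise their dual analogues, will be maximal Cohen--Macaulay, hence reflexive, over the Gorenstein ring $R$ (this is the modifying-module picture behind an NCCR, cf.~\cite{VdB04, VdB23}). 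A reflexive module equals $j_*$ of its restriction to $U$, for $j \colon U \hookrightarrow \Spec R$, so it is enough to construct $\mcO_U$-linear isomorphisms $\mcT_+|_U \cong \mcT_-|_U$ and $\mcT_+^{\vee}|_U \cong \mcT_-^{\vee}|_U$ through the flop.

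For the line-bundle summands I would argue as follows. Let $\sigma \colon \wX \to W$ denote the projection and $E \subset \wX$ the exceptional divisor; the tautological section of $\sigma^*\mcO_W(-1,-1) \cong \mcO_{\wX}(E)$ vanishes exactly on $E$, so $\sigma^*\mcO_W(1,1)$ is trivial on $\wX \setminus E \cong U$. Using $\phi_+^*\mcO_{X_+}(a) = \sigma^*\mcO_W(a,0)$ and $\phi_-^*\mcO_{X_-}(a) = \sigma^*\mcO_W(0,a)$ and subtracting copies of $\mcO_W(1,1)$, one gets $\mcO_{X_+}(a)|_U \cong \mcO_{X_-}(-a)|_U$; as $\{-2,-1,0,1,2\}$ is stable under $a \mapsto -a$, the line-bundle parts of $\mcT_+$ and $\mcT_-$ match over $U$.

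The crux is the summands $\mcP_{\pm}$, and this is exactly where the key bundle $\mcE$ will do the work. Pulling the exchange diagram back along $\sigma$ and restricting to $\wX \setminus E$, the two exchange sequences of Lemma~\ref{lem non-split} become extensions of $\phi_{\pm}^*\mcS_{X_{\pm}}^{\vee}$ by $\mcO_{X_{\pm}}(-2)|_U$ --- once the now-trivial $\mcO_W(1,1)$ is absorbed into the rank-one sub --- each with total space $\sigma^*\mcE$. These are precisely the restrictions to $U$ of the defining extensions of the $\mcP_{\pm}$. The non-splitting proved in Lemma~\ref{lem non-split}, together with the one-dimensionality of $\Ext^1_{X_{\pm}}(\mcS_{X_{\pm}}^{\vee}, \mcO_{X_{\pm}}(-2)) \cong \C$ coming from Lemma~\ref{pretilting}, should then force both classes to be the unique non-split one, giving $\phi_+^*\mcP_+|_U \cong \sigma^*\mcE|_U \cong \phi_-^*\mcP_-|_U$ and, dually, $\phi_+^*\mcP_+^{\vee}|_U \cong \sigma^*\mcE^{\vee}|_U \cong \phi_-^*\mcP_-^{\vee}|_U$. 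In other words, $\mcE$ is the common restriction of $\mcP_+$ and $\mcP_-$ over $U$, which is the role the exchange diagram was built to play.

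Putting the pieces together, $\mcT_+|_U \cong \mcT_-|_U$ and $\mcT_+^{\vee}|_U \cong \mcT_-^{\vee}|_U$ as $\mcO_U$-modules, and applying $j_*$ produces the two $R$-module isomorphisms claimed. The main obstacle I anticipate is the extension-class matching of the third paragraph: one has to verify that the class defining $\mcP_{\pm}$ and the class of the exchange sequence restrict to the \emph{same} line in $\Ext^1$ over $U$, which in turn requires controlling whether this $\Ext^1$ can grow upon passing from $X_{\pm}$ to $U$, so that the ``unique non-split'' characterisation is preserved under restriction. This is the step in which the $D_4$ geometry encoded in $\mcE$ is indispensable; everything else is a formal consequence of reflexivity and of the trivialisation of $\mcO_W(1,1)$ off the exceptional divisor.
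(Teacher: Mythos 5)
Your overall skeleton --- compare the bundles over the locus $U$ where $f_{\pm}$ are isomorphisms, then extend by reflexivity; match the line-bundle summands via the trivialisation of $\mcO_{\wX}(E)$ off $E$; identify $\mcP_{\pm}$ with the key bundle $\mcE$ over $U$ --- is the same strategy as the paper's, which packages the first step as the equivalences $\refl(X_+) \simeq \refl(\Spec R) \simeq \refl(X_-)$ and the last step as Proposition~\ref{prop exchange}, $(\phi_{\pm})_*(\mcE_{\wX}(E)) \simeq \mcP_{\pm}$. However, the step you flag as ``the main obstacle'' is a genuine gap, and it is precisely the mathematical content of that proposition, not a routine verification. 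The specific problem is that Lemma~\ref{lem non-split} cannot do the work you ask of it: it gives non-splitness of the exchange sequences on $W$, hence on $\wX$ (restrict to the zero section), but $U \cong \wX \setminus E$ is the complement of a \emph{divisor}, and extension classes need not survive restriction to divisor complements --- no codimension or depth argument applies there (already on $\P^1$ a non-split extension of line bundles splits after deleting a point). So from what you have established, the restricted sequence $0 \to \mcO_{X_+}(-2)|_U \to \mcE|_U \to \mcS_{X_+}^{\vee}|_U \to 0$ could a priori be split, in which case $\mcE|_U$ would not be $\mcP_+|_U$ and the identification collapses.

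What is needed, and what the paper actually proves, is that the exchange class maps to a \emph{non-zero} element of $\Ext^1_{X_+}(\mcS_{X_+}^{\vee}, \mcO_{X_+}(-2))$. The paper does this by pushing the $\mcO_{\wX}(E)$-twisted sequence forward along $\phi_+$ and showing the resulting sequence on $X_+$ is non-split, because the comparison map $\varepsilon_+ \colon \Ext^1_{\wX}(\phi_+^*\mcS_{X_+}^{\vee}(E), \mcO_{\wX}(-2,0)) \to \Ext^1_{X_+}(\mcS_{X_+}^{\vee}, \mcO_{X_+}(-2))$ is injective; injectivity reduces, via the counit exact sequence $0 \to \phi_+^*\mcS_{X_+}^{\vee} \to \phi_+^*\mcS_{X_+}^{\vee}(E) \to \phi_+^*\mcS_{X_+}^{\vee}(E)|_E \to 0$, to the concrete vanishing $\Hom_{H_+}(\mcS_+^{\vee}, \mcO_{H_+}(-2)) = 0$ (and its analogue for $-$). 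Some cohomological input of this kind is unavoidable, and your proposal contains no substitute for it. By contrast, your other worry --- that $\Ext^1$ might ``grow'' in passing to $U$ --- is the easy part: the complement of $U$ in $X_{\pm}$ is the zero section, of codimension $3$, so local cohomology of locally free sheaves gives $\Ext^1_{X_{\pm}}(\mcS_{X_{\pm}}^{\vee}, \mcO_{X_{\pm}}(-2)) \xrightarrow{\sim} \Ext^1_U$, and both the one-dimensionality and the non-splitness of the class of $\mcP_{\pm}$ pass to $U$ intact. (A minor further point: you do not need, and have not justified, the maximal Cohen--Macaulay claim; reflexivity of $(f_{\pm})_*\mcT_{\pm}$ follows already from smallness of $f_{\pm}$ and local freeness of $\mcT_{\pm}$, which gives $(f_{\pm})_*\mcT_{\pm} \simeq j_*(\mcT_{\pm}|_U)$ directly. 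The CM property is a separate remark in the paper, made after the corollary.)
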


The proof of this theorem requires the observation from Section~\ref{key bundle}.
Let $\mcE$ be the key bundle over $W$.
Let $\mcE_{\wX}$ denote the pull-back of the key bundle $\mcE$ to $\wX$ by the projection $\wX = \Tot_W(\mcO_W(-1,-1)) \to W$.
The pull-back of the exchange diagram over $W$ to $\wX$ by the projection gives two exact sequences
\begin{align*}
&0 \to \mcO_{\wX}(-1,1) \to \mcE_{\wX} \to \phi_+^*\mcS_{X_+}^{\vee} \to 0, \\
&0 \to \mcO_{\wX}(1,-1) \to \mcE_{\wX} \to \phi_-^*\mcS_{X_-}^{\vee} \to 0.
\end{align*}
By Lemma~\ref{lem non-split}, these sequences do not split.
Indeed, if the above sequences split, their restriction to the zero-section, which coincide with the sequences in Lemma~\ref{lem non-split}, remain split, but this is a contradiction.

\begin{prop} \label{prop exchange}
There exists an isomorphism ${\phi_{\pm}}_*(\mcE_{\wX}(E)) \simeq \mcP_{\pm}$ for each $+$ and $-$.
\end{prop}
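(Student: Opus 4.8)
The plan is to prove the isomorphism for the sign $+$; the case of $-$ is completely symmetric, obtained by exchanging the two $\P^2$-bundle structures and using the second exchange sequence. The whole argument rests on one line-bundle identity: since $\wX=\Tot_W(\mcO_W(-1,-1))$ and $E=W$ is its zero-section, the tautological section of the pull-back of $\mcO_W(-1,-1)$ vanishes transversally exactly along $E$, so that $\mcO_{\wX}(E)\cong\mcO_{\wX}(-1,-1)$. Using $\mcO_{\wX}(1,0)\cong\phi_+^*\mcO_{X_+}(1)$, twisting the first exchange sequence by $\mcO_{\wX}(E)$ then rewrites it as
\[ 0\to\phi_+^*\mcO_{X_+}(-2)\to\mcE_{\wX}(E)\to\phi_+^*\mcS_{X_+}^{\vee}\otimes\mcO_{\wX}(E)\to0, \]
where I have used $\mcO_{\wX}(-1,1)\otimes\mcO_{\wX}(E)\cong\mcO_{\wX}(-2,0)\cong\phi_+^*\mcO_{X_+}(-2)$.

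Next I would apply $R(\phi_+)_*$. The key computation is $R(\phi_+)_*\mcO_{\wX}(E)\cong\mcO_{X_+}$: the map $\phi_+$ is the blow-up of the smooth codimension-three centre $H_+^0$, its exceptional fibres are the $\P^2$'s of $\pi_+$, and $\mcO_{\wX}(E)|_E=\mcO_W(-1,-1)$ restricts to $\mcO_{\P^2}(-1)$ on each such fibre; hence $R(\phi_+)_*(\mcO_{\wX}(E)|_E)=0$, and the sequence $0\to\mcO_{\wX}\to\mcO_{\wX}(E)\to\mcO_{\wX}(E)|_E\to0$ together with $R(\phi_+)_*\mcO_{\wX}\cong\mcO_{X_+}$ gives the claim. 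By the projection formula the right-hand term then pushes to $\mcS_{X_+}^{\vee}$ and the left-hand term to $\mcO_{X_+}(-2)$, both in degree zero, so $R^{>0}(\phi_+)_*\mcE_{\wX}(E)=0$ and the long exact sequence collapses to a short exact sequence
\[ 0\to\mcO_{X_+}(-2)\to(\phi_+)_*\mcE_{\wX}(E)\to\mcS_{X_+}^{\vee}\to0 \]
of vector bundles on $X_+$. This exhibits $(\phi_+)_*\mcE_{\wX}(E)$ as an extension of $\mcS_{X_+}^{\vee}$ by $\mcO_{X_+}(-2)$, of exactly the shape defining $\mcP_+$.

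Finally I would identify the extension. By Lemma~\ref{pretilting} one has $\Ext_{X_+}^1(\mcS_{X_+}^{\vee},\mcO_{X_+}(-2))\cong H^1(X_+,\mcS_{X_+}^{\vee}(-3))\cong\C$, so the only extensions are the split one and $\mcP_+$, and it remains to show the sequence above does not split. Writing $\delta$ for its class, adjunction (with the counit isomorphism $\phi_+^*\mcO_{X_+}(-2)\cong\mcO_{\wX}(-2,0)$) identifies $\delta$ with $\phi_+^*\delta\in\Ext_{\wX}^1(\phi_+^*\mcS_{X_+}^{\vee},\mcO_{\wX}(-2,0))$. Comparing the $\phi_+^*$-pull-back of the push-forward sequence with the twisted exchange sequence via the counit produces a morphism of short exact sequences which shows $\phi_+^*\delta=j_*\e'$, where $\e'\neq0$ is the class of the original (non-split) exchange sequence $0\to\mcO_{\wX}(-1,1)\to\mcE_{\wX}\to\phi_+^*\mcS_{X_+}^{\vee}\to0$ and $j\colon\mcO_{\wX}(-1,1)\hookrightarrow\mcO_{\wX}(-2,0)$ is multiplication by the section cutting out $E$. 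Applying $\Hom_{\wX}(\phi_+^*\mcS_{X_+}^{\vee},-)$ to $0\to\mcO_{\wX}(-1,1)\to\mcO_{\wX}(-2,0)\to\mcO_{\wX}(-2,0)|_E\to0$ and using $\Hom_{\wX}(\phi_+^*\mcS_{X_+}^{\vee},\mcO_{\wX}(-2,0)|_E)\cong H^0(\Q^5,\mcS^{\vee}(-3))=0$ shows that $j_*$ is injective, whence $\phi_+^*\delta=j_*\e'\neq0$ and so $\delta\neq0$. Therefore $(\phi_+)_*\mcE_{\wX}(E)\cong\mcP_+$.

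The main obstacle is this last step: the cohomological collapse is routine once $\mcO_{\wX}(E)\cong\mcO_{\wX}(-1,-1)$ is in hand, but matching the extension class $\delta$ downstairs with the known non-split class $\e'$ upstairs requires careful bookkeeping of the $\mcO_{\wX}(E)$-twist through adjunction and the counit, and the final injectivity of $j_*$ depends on the vanishing $H^0(\Q^5,\mcS^{\vee}(-3))=0$.
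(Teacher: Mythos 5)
Your proposal is correct and follows essentially the same route as the paper: twist the pulled-back exchange sequence by $\mcO_{\wX}(E)\simeq\mcO_{\wX}(-1,-1)$, push forward using $R(\phi_{\pm})_*\mcO_{\wX}(E)\simeq\mcO_{X_{\pm}}$ to obtain the extension of $\mcS_{X_{\pm}}^{\vee}$ by $\mcO_{X_{\pm}}(-2)$, and rule out splitting by matching extension classes upstairs and downstairs, which reduces to the same vanishing $\Hom_{H_+}(\mcS_+^{\vee},\mcO_{H_+}(-2))=H^0(\Q^5,\mcS^{\vee}(-3))=0$. The only cosmetic difference is that you prove injectivity of the pushout $j_*$ along the first term of the sequence, whereas the paper proves injectivity of the pullback $\varepsilon_{\pm}$ along the counit $\eta$ on the third term; these two maps carry the same classes ($j_*\e'=\eta^*[\text{twisted sequence}]=\phi_+^*\delta$) via exactly the morphism of extensions you describe, so the arguments coincide.
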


\begin{proof}
Since $\mcO_{\wX}(-1,-1) \simeq \mcO_{\wX}(E)$, twisting the above exact sequences by $\mcO_{\wX}(E)$ gives
\begin{align*}
&0 \to \mcO_{\wX}(-2,0) \to \mcE_{\wX}(E) \to \phi_+^*\mcS_{X_+}^{\vee}(E) \to 0, \\
&0 \to \mcO_{\wX}(0,-2) \to \mcE_{\wX}(E) \to \phi_-^*\mcS_{X_-}^{\vee}(E) \to 0.
\end{align*}
By the projection formula, $R{\phi_+}_*\mcO_{\wX}(-2,0) \simeq \mcO_{X_+}(-2)$ and $R{\phi_-}_*\mcO_{\wX}(0,-2) \simeq \mcO_{X_-}(-2)$.
In addition, since $\phi_{\pm}$ are smooth blowing-ups, $\mcO_E(E)$ is the universal line bundle for 
both $\pi_{\pm} \colon E \to H_{\pm}^0$, and hence $R{\phi_{\pm}}_*\mcO_E(E) = 0$ holds.
Therefore, the standard exact sequence $0 \to \mcO_{\wX} \to \mcO_{\wX}(E) \to \mcO_E(E) \to 0$ shows that $R{\phi_{\pm}}_*\mcO_{\wX}(E) \simeq \mcO_{X_{\pm}}$.
In particular, the projection formula yields an isomorphism
\[ R{\phi_{\pm}}_*\left(\phi_{\pm}^*\mcS_{X_{\pm}}^{\vee}(E)\right) \simeq \mcS_{X_{\pm}}^{\vee} \]
for each $+$ and $-$.
Thus, $R{\phi_{\pm}}_*(\mcE_{\wX}(E)) \simeq {\phi_{\pm}}_*(\mcE_{\wX}(E))$, and this sheaf lies in an exact sequence
\[ 0 \to \mcO_{X_{\pm}}(-2) \to {\phi_{\pm}}_*(\mcE_{\wX}(E)) \to \mcS_{X_{\pm}}^{\vee} \to 0. \]
We claim that this exact sequence does not split for each $+$ and $-$.
To see this, it is enough to show that the two associated homomorphisms
\begin{align*}
\varepsilon_+ \colon &\Ext^1_{\wX}(\phi_+^*\mcS_{X_+}^{\vee}(E), \mcO_{\wX}(-2,0)) \to \Ext_{X_+}^1(\mcS_{X_+}^{\vee}, \mcO_{X_+}(-2)), \\
\varepsilon_- \colon&\Ext^1_{\wX}(\phi_-^*\mcS_{X_-}^{\vee}(E), \mcO_{\wX}(0,-2)) \to \Ext_{X_-}^1(\mcS_{X_-}^{\vee}, \mcO_{X_-}(-2))
\end{align*}
are injective.
Since
\[ \Ext_{X_+}^1(\mcS_{X_+}^{\vee}, \mcO_{X_+}(-2)) \simeq \Ext_{\wX}^1(\phi_+^*\mcS_{X_+}^{\vee}, \mcO_{\wX}(-2,0)),  \]
the homomorphism $\varepsilon_+$ is obtained by applying the functor $\Ext^1_{\wX}(-,\mcO_{\wX}(-2,0))$ to the 
counit morphism
\[ \eta \colon \phi_+^*\mcS_{X_+}^{\vee} \simeq \phi_+^*{\phi_+}_*\phi_+^*\mcS_{X_+}^{\vee}(E) \to \phi_+^*\mcS_{X_+}^{\vee}(E). \]
Note that this $\eta$ is injective and has the cokernel $\phi_+^*\mcS_{X_+}^{\vee}(E)|_E$.
Now
\begin{align*}
\Ext^1_{\wX}(\phi_+^*\mcS_{X_+}^{\vee}(E)|_E, \mcO_{\wX}(-2,0)) &\simeq \Ext^1_E(\phi_+^*\mcS_{X_+}^{\vee}(E)|_E, \mcO_{\wX}(-2,0)|_E \otimes_{\mcO_E} \mcO_E(E)[-1]) \\
&\simeq \Hom_{W}(\pi_+^*\mcS_+^{\vee}, \mcO_{W}(-2,0)) \\
&\simeq \Hom_{H_+}(\mcS_+^{\vee}, \mcO_{H_+}(-2)) = 0.
\end{align*}
Therefore, $\varepsilon_+$ is injective.
The same argument shows that $\varepsilon_-$ is also injective.
Finally, the uniqueness of the non-trivial extensions yields 
\[ {\phi_{\pm}}_*(\mcE_{\wX}(E)) \simeq \mcP_{\pm} \]
as desired.
\end{proof}

\begin{proof}[Proof of Theorem~\ref{tilting exchange}]
Let $\refl(Y)$ denote the category of reflexive coherent sheaves over a normal scheme $Y$.
Since $f_{\pm} \colon X_{\pm} \to \Spec R$ are flopping contractions, they are isomorphic in codimension one,
and hence the push-forwards ${f_{\pm}}_*$ give equivalences of categories
\[ \refl(X_+) \simeq \refl(\Spec R) \simeq \refl(X_-). \]
Since $X_+ \dashrightarrow X_-$ is a flop, ${f_+}_*\mcO_{X_+}(a) \simeq {f_-}_*\mcO_{X_-}(-a) \in \refl(\Spec R)$
for all $a \in \Z$.
In addition, Proposition~\ref{prop exchange} gives
\[ {f_+}_*\mcP_+ \simeq (f_+ \circ \phi_+)_*(\mcE_{\wX}(E)) \simeq (f_- \circ \phi_-)_*(\mcE_{\wX}(E)) \simeq {f_-}_*\mcP_- \in \refl(\Spec R). \]
Since the equivalences commute with duals, ${f_+}_*\mcP_+^{\vee} \simeq {f_-}_*\mcP_-^{\vee}$ also holds.
This completes the proof of the theorem.
\end{proof}

\begin{proof}[Proof of Theorem~\ref{main thm}]
The tilting bundles $\mcT_{\pm}$ give equivalences
\[ \tau_{\pm} := \RHom_{X_{\pm}}(\mcT_{\pm}, -) \colon \Db(\coh X_{\pm}) \xrightarrow{\sim} \Db(\modu \End_{X_{\pm}}(\mcT_{\pm})). \]
The equivalences of categories $\refl(X_+) \simeq \refl(\Spec R) \simeq \refl(X_-)$ together with Theorem~\ref{tilting exchange} imply that
\[ \Lambda := \End_{X_+}(\mcT_+) \simeq \End_R({f_+}_*\mcT_+) \simeq \End_R({f_-}_*\mcT_-) \simeq \End_{X_-}(\mcT_-) \]
as $R$-algebras. 
This yields the equivalences of $R$-linear triangulated categories
\[ \Db(\coh X_+) \xrightarrow[\tau_+]{\sim} \Db(\modu \Lambda) \xrightarrow[\tau_-^{-1}]{\sim} \Db(\coh X_-). \]
The derived equivalence $\tau_+$ sends $\mcO_{X_+}$ to the projective $\Lambda$-module $P_0$ corresponding to the summand $R = {f_+}_*\mcO_{X_+}$ of ${f_+}_*\mcT_+$, and then $\tau_-^{-1}$ sends $P_0$ to $\mcO_{X_-}$.
If 
\[ \Phi := \tau_-^{-1} \circ \tau_+ \colon \Db(\coh X_+) \xrightarrow{\sim} \Db(\coh X_-) \]
denotes the composited derived equivalence,
then for any $\mcF \in \Db(\coh X_+)$,
\begin{align*}
R{f_+}_*\mcF &\simeq \RHom_{X_+}(\mcO_{X_+}, \mcF) 
\simeq \RHom_{X_-}(\Phi \mcO_{X_+}, \Phi \mcF) 
\simeq  \RHom_{X_-}(\mcO_{X_-}, \Phi \mcF) 
\simeq (R{f_-}_* \circ \Phi)(\mcF).
\end{align*}
All the above isomorphisms are functorial, and hence there is a functor isomorphism $R{f_+}_* \simeq  R{f_-}_* \circ \Phi$.
This completes the proof of the main theorem.
\end{proof}

\begin{rem}
Using $\mcT_{\pm}^{\vee}$ instead of $\mcT_{\pm}$ gives another equivalence
\[ \Psi \colon \Db(\coh X_+) \xrightarrow{\sim} \Db(\coh X_-). \]
The composition $\Psi^{-1} \circ \Phi$ gives a non-trivial autoequivalence of $\Db(\coh X_+)$ that fixes five line bundles $\mcO_{X_+}(-2), \mcO_{X_+}(-1), \mcO_{X_+}, \mcO_{X_+}(1), \mcO_{X_+}(2)$.
It would be interesting to study the relation between this autoequivalence and spherical twists.
\end{rem}

During the proof of the main theorem, the following corollary also had been proved.
Note that, for any local model $f \colon \mathcal{X} \to \Spec \mathcal{R}$ of a flopping contraction of type $G_2^{\dagger}$, the completion $\mathfrak{R}$ of $\mathcal{R}$ along the maximal ideal that corresponds to the singularity does not depend on the choice of local models, and $\Spec \mathfrak{R}$ has exactly two crepant resolutions.

\begin{cor}
Two $R$-modules ${f_+}_*\mcT_+ \simeq {f_-}_*\mcT_-$ and ${f_+}_*\mcT_+^{\vee} \simeq {f_-}_*\mcT_-^{\vee}$ give NCCRs of $R$ (cf.~\cite{VdB04, VdB23}).
In particular, the complete local ring $\mathfrak{R}$ of the singularity that appears in a simple flopping contraction of type $G_2^{\dagger}$ admits an NCCR that is derived equivalent to both crepant resolutions.
\end{cor}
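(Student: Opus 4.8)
The plan is to verify, for the module $M := (f_+)_*\mcT_+ \simeq (f_-)_*\mcT_-$, the three defining properties of a noncommutative crepant resolution in the sense of Van den Bergh \cite{VdB04}: that $\Lambda := \End_R(M)$ arises from a reflexive module $M$, that $\Lambda$ is a maximal Cohen--Macaulay (MCM) $R$-module, and that $\Lambda$ has finite global dimension. Two of these are essentially already in hand from the proof of Theorem~\ref{main thm}. Indeed, $M$ is reflexive because it is the push-forward, along the flopping contraction $f_+$ (an isomorphism in codimension one), of the locally free, hence reflexive, sheaf $\mcT_+$; this is precisely the equivalence $\refl(X_+)\simeq\refl(\Spec R)$ used there. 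That same equivalence identifies $\Lambda=\End_R(M)\simeq\End_{X_+}(\mcT_+)$ as $R$-algebras, so all remaining questions can be transported to the smooth space $X_+$.

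For the finite global dimension I would use that $\mcT_+$ is a tilting bundle on the \emph{smooth} variety $X_+$. The tilting equivalence $\Db(\coh X_+)\xrightarrow{\sim}\Db(\modu\Lambda)$ together with smoothness forces $\Lambda$ to have finite global dimension: every coherent sheaf on $X_+$ admits a finite locally free resolution, so $\RHom_{X_+}$ has bounded amplitude, and this boundedness is inherited by $\modu\Lambda$ through the equivalence (c.f.~\cite{VdB04}). Since $\Lambda$ is module-finite over the Noetherian ring $R$ it is itself Noetherian, so $\mathrm{gl.dim}\,\Lambda<\infty$; combined with the MCM property below and the fact that $R$ is Gorenstein of dimension $8$, this upgrades automatically to $\mathrm{gl.dim}\,\Lambda=\dim R$.

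The main work, and the step I expect to be the technical heart, is the MCM property. The key inputs are that $\mcT_+$ is tilting, that $X_+$ is a local Calabi--Yau (so $\omega_{X_+}\simeq\mcO_{X_+}$), and that $f_+$ is crepant (whence $R$ is Gorenstein with $\omega_R\simeq R$). First, tilting gives $\Ext^{>0}_{X_+}(\mcT_+,\mcT_+)=0$, so since $\Spec R$ is affine the higher direct images of $\mcT_+^{\vee}\otimes\mcT_+$ vanish and $R(f_+)_*(\mcT_+^{\vee}\otimes\mcT_+)\simeq\Lambda$ is concentrated in degree zero. Next I would apply Grothendieck duality to $f_+$,
\[ \RHom_R\bigl(R(f_+)_*(\mcT_+^{\vee}\otimes\mcT_+),\,\omega_R^{\bullet}\bigr)\simeq R(f_+)_*\RHom_{X_+}\bigl(\mcT_+^{\vee}\otimes\mcT_+,\,\omega_{X_+}[8]\bigr), \]
using $f_+^!\omega_R^{\bullet}\simeq\omega_{X_+}^{\bullet}\simeq\omega_{X_+}[8]$. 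Because $\mcT_+^{\vee}\otimes\mcT_+$ is locally free and self-dual and $\omega_{X_+}\simeq\mcO_{X_+}$, the right-hand side equals $R(f_+)_*(\mcT_+^{\vee}\otimes\mcT_+)[8]\simeq\Lambda[8]$, while the left-hand side equals $\RHom_R(\Lambda,R)[8]$ via $\omega_R^{\bullet}\simeq R[8]$. Comparing, $\RHom_R(\Lambda,R)\simeq\Lambda$ sits in degree zero, i.e.~$\Ext^{i}_R(\Lambda,R)=0$ for all $i>0$, which over the Gorenstein ring $R$ is exactly the MCM condition. The same computation applies verbatim to $M^{\vee}$. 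The points requiring care are the crepancy/Calabi--Yau identifications $\omega_{X_+}\simeq\mcO_{X_+}$ and $\omega_R\simeq R$ (the latter by Grauert--Riemenschneider, since $f_+$ is a crepant contraction from a smooth Calabi--Yau and $R$ therefore has rational Gorenstein singularities) and the compatibility $f_+^!\omega_R^{\bullet}\simeq\omega_{X_+}^{\bullet}$ of duality.

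Finally, for the ``in particular'' statement I would localize and complete at the maximal ideal $\mf{m}\subset R$ cutting out the singular point (the image of the exceptional locus). Writing $\mathfrak{R}=\widehat{R_{\mf{m}}}$ and $\widehat{\Lambda}=\Lambda\otimes_R\mathfrak{R}$, this passage is flat, so it yields $\widehat{\Lambda}\simeq\End_{\mathfrak{R}}(M\otimes_R\mathfrak{R})$ with $M\otimes_R\mathfrak{R}$ reflexive, preserves the MCM property, and does not increase the global dimension; hence $\widehat{\Lambda}$ is an NCCR of $\mathfrak{R}$. Since $\mathfrak{R}$ is independent of the chosen local model, as noted before the statement, this exhibits an NCCR of the complete local singularity appearing in any simple flopping contraction of type $G_2^{\dagger}$.
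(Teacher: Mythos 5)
Your proposal is correct and takes essentially the same approach the paper intends: the paper deduces the corollary from the proof of Theorem~\ref{main thm} (reflexivity of $(f_\pm)_*\mcT_\pm$ and the identification $\End_R((f_\pm)_*\mcT_\pm)\simeq\End_{X_\pm}(\mcT_\pm)=\Lambda$), from the tilting equivalence with the smooth varieties $X_\pm$ (finite global dimension), and from the Grothendieck--Serre duality argument in the proof of \cite[Lemma~A.2]{TU10} (the Cohen--Macaulay property), which is precisely the duality computation you spell out. The only difference is one of exposition: you write out in full the crepancy/Gorenstein identifications, the duality step, and the localization-completion step that the paper leaves as citations to \cite{VdB04, VdB23} and \cite{TU10}.
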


Note that two $R$-modules above are both Cohen--Macaulay (cf.~the proof of \cite[Lemma~A.2]{TU10}).

\subsection*{Acknowledgements}
The author would also like to thank Conan Naichung Leung and IMS at CUHK for organizing the workshop that rekindled the author's interest in this research topic.
The author would like to thank Takeru Fukuoka, Akihiro Kanemitsu, Hayato Morimura, Yukinobu Toda, and Michael Wemyss for very helpful conversations,
and anonymous referees for their careful reading and valuable comments.
This work was carried out while the author was a Project Researcher at the Kavli Institute for the Physics and Mathematics of the Universe (Kavli IPMU).
The author would like to express sincere gratitude for the excellent research environment and generous support provided by Kavli IPMU.
This work was supported by World Premier International Research Center Initiative (WPI), MEXT, Japan, 
and by JSPS KAKENHI Grant Number JP24K22829.

\bibliography{flop}
\bibliographystyle{amsplain}

\end{document}